\newcommand{\mi}{\mathrm{i}}
\newcommand{\e}{\varepsilon}
\newcommand{\I}{\mathcal{I}}
\newcommand{\oh}{\mbox{$\frac{1}{2}$}}
\DeclareFontFamily{OT1}{rsfs}{}
\DeclareFontShape{OT1}{rsfs}{n}{it}{<-> rsfs10}{}
\DeclareMathAlphabet{\mathscr}{OT1}{rsfs}{n}{it}
\DeclareMathOperator{\meas}{meas}
\newtheorem{prop}{Proposition}[section]
\newtheorem{thm}[prop]{Theorem}
\newtheorem{theorem}[prop]{Theorem}
\newtheorem{lemma}[prop]{Lemma}
\newtheorem*{defn*}{Definition}
\newtheorem{conj}{Conjecture}
\numberwithin{equation}{section}
\renewcommand{\Re}{{\mathfrak{Re}}}
\renewcommand{\imath}{i}
\begin{document}

\title[]{Shifted moments of the Riemann zeta function}

\author{Nathan Ng}

\address{Department of Mathematics and
Computer Science, University of Lethbridge, Lethbridge, AB Canada T1K 3M4}
\email{nathan.ng@uleth.ca}

\author{Quanli Shen}

\address{Department of Mathematics and
Computer Science, University of Lethbridge, Lethbridge, AB Canada T1K 3M4}
\email{qlshen@outlook.com}

\author{Peng-Jie Wong}

\address{National Center for Theoretical Sciences\\
No. 1, Sec. 4, Roosevelt Rd., Taipei City, Taiwan}
\email{pengjie.wong@ncts.tw}

\subjclass[2000]{Primary 11M06; Secondary 11M41}


\date{\today}

\keywords{Moments of the Riemann zeta function}

\begin{abstract}
In this article, we prove that the Riemann hypothesis implies a conjecture of Chandee on shifted moments of the Riemann zeta function. The proof is based on ideas of Harper concerning sharp upper bounds for  the $2k$-th moments of the Riemann zeta function on the 
critical line.
\end{abstract}

\maketitle

\section{Introduction}

This article concerns the shifted moments of the Riemann zeta function 
\begin{equation}
  \label{IkTalphaj}
  I_k( T, \alpha_1 , \alpha_2) =  \int_{0}^{T} |\zeta( \oh + \mi (t+ \alpha_1)) |^{k} |\zeta( \oh + \mi (t+\alpha_2 )) |^{k} dt,
\end{equation}
where $T \ge 1$ and $\alpha_1:=\alpha_1(T), \alpha_2 :=\alpha_2(T)$ are real-valued functions satisfying 
\begin{equation}
  \label{alphaibd}
  |\alpha_1|, |\alpha_2| \le 0.5T.
\end{equation}
These are generalisations of the $2k$-th moments of the Riemann zeta function
$$
I_k(T) = \int_{0}^{T} |\zeta( \oh +\mi t)|^{2k} dt,
$$
since $ I_k(T)  =  I_k( T, 0,0 )$.  The theory of the moments of the Riemann zeta function is an important topic in  analytic number theory (see the classic books \cite{Ti}, \cite{Iv}, \cite{Mo}, and \cite{Ram95}). Unconditionally, Heap-Soundararajan \cite{HS} (for $0<k<1$) and {Radziwi\l\l}-Soundararajan \cite{RaSo} (for $k\ge 1$) proved that 
\begin{equation*}
I_k(T) \gg   T (\log T)^{k^2} .
\end{equation*}
Assuming the Riemann hypothesis, Harper \cite{Ha} showed that for any  $k \ge 0$,  
\begin{equation}
   \label{ub}
  I_k(T) \ll T(\log T)^{k^2}.
\end{equation}
Harper's argument builds on the work of Soundararajan \cite{So}, who showed that under the the Riemann hypothesis, for any $\e >0$, one has
\begin{align}
 I_k(T) \ll T(\log T)^{k^2+\e}.
 \label{soundbd}
\end{align}
Based on a random matrix model, Keating and Snaith \cite{KS} conjectured that for $k \in \mathbb{N}$, 
\begin{equation}
  \label{ksasymp}
I_k(T) \sim C_k T (\log T)^{k^2},
\end{equation}
for a precise constant $C_k$.
 By the classical works of Hardy-Littlewood \cite{HL} and Ingham \cite{In}, the asymptotic \eqref{ksasymp} is known, unconditionally, for $k=1,2$. Recently, 
 the first author \cite{Ng} showed that a certain conjecture for ternary additive divisor sums implies the validity of \eqref{ksasymp} for $k=3$.  In \cite{NSW}, the authors have shown that the Riemann hypothesis and a certain conjecture for quaternary additive divisor sums imply that \eqref{ksasymp} is true in the case $k=4$.  This work \cite{NSW} crucially uses the bounds for the shifted moments of the zeta function established in Theorem \ref{refined_Harper} below.

In \cite{Ch}, the more general shifted moments
\begin{equation*}
 \label{shiftedzeta}
M_{{\bf k}}(T,  {\bf \alpha}) = \int_{0}^{T} |\zeta(\oh + \mi (t+\alpha_1) )|^{2 k_1} \cdots |\zeta( \oh + \mi (t+\alpha_m)|^{2 k_m}  dt,
\end{equation*}
where ${\bf k} = (k_1, \ldots, k_m) \in (\mathbb{R}_{> 0})^m$ and ${\bf \alpha}= (\alpha_1, \ldots, \alpha_m) \in \mathbb{R}^m$,  were introduced. 
Chandee \cite[Theorems 1.1 and 1.2]{Ch} proved the following upper and lower bounds for $  I_{{\bf k}}(T,  {\bf \alpha})$.

\begin{thm}[Chandee]\label{Chandee-thm}
Let $k_i$ be positive real numbers. Let $\alpha_i=\alpha_i(T)$ be real-valued functions of $T$ such that $\alpha_i =o(T)$. Assume that $\lim_{T\rightarrow \infty} \alpha_i\log T$ and $\lim_{T\rightarrow \infty} (\alpha_i-\alpha_j)\log T$ exist or equal $\pm \infty$. Assume that for $i\neq j$, $\alpha_i\neq \alpha_j$ and  $\alpha_i-\alpha_j =O(1)$. 
Then the Riemann Hypothesis implies that for $T$ sufficiently large, one has
$$
  M_{{\bf k}}(T,  {\bf \alpha})\ll_{{\bf k},\varepsilon} T(\log T)^{k_1^2 + \cdots k_m^2 +\varepsilon} \prod_{i<j} \left( \min\left\{\frac{1}{|\alpha_i-\alpha_j|}, \log T \right\}\right)^{2k_ik_j}.
$$
Furthermore, if $k_i$ are positive integers, then for $T$ sufficiently large, unconditionally, one has
$$
  M_{{\bf k}}(T,  {\bf \alpha})\gg_{{\bf k}, \beta} T(\log T)^{k_1^2 + \cdots k_m^2 } \prod_{i<j}  \left( \min\left\{\frac{1}{|\alpha_i-\alpha_j|}, \log T \right\}\right)^{2k_ik_j},
$$
where
$$
\beta= \max_{ \{(i,j)\mid |\alpha_i-\alpha_j| =O(1/ \log (T))\} }\left\{\lim_{T\rightarrow \infty}  |\alpha_i-\alpha_j|\log T \right\}.
$$
\end{thm}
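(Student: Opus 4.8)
The plan is to obtain the conditional upper bound by adapting Soundararajan's treatment \cite{So} of the $2k$-th moments of $\zeta$ on the critical line, and the unconditional lower bound (for integer $k_i$) by the Cauchy--Schwarz device of Ramachandra in the form used by Radziwi\l\l--Soundararajan \cite{RaSo}. For the upper bound, a dyadic subdivision reduces matters to bounding $\int_T^{2T}\prod_{\ell=1}^m|\zeta(\tf+\mi(t+\alpha_\ell))|^{2k_\ell}\,dt$, the hypothesis $\alpha_i=o(T)$ guaranteeing $t+\alpha_\ell\asymp T$ throughout. For a parameter $x\ge2$, Soundararajan's RH-conditional pointwise inequality expresses $\log|\zeta(\tf+\mi\tau)|$ as the real part of a Dirichlet polynomial over prime powers $n\le x$ with coefficients $\frac{\Lambda(n)}{n^{\sigma_0+\mi\tau}\log n}\frac{\log(x/n)}{\log x}$ (where $\sigma_0=\tf+\frac{1}{\log x}$), up to an error $O\big(\frac{\log\tau}{\log x}\big)$. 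Applying this with $\tau=t+\alpha_\ell$, combining with weights $2k_\ell$, and discarding the prime powers (whose contribution is $O(1)$) bounds $\log\prod_\ell|\zeta(\tf+\mi(t+\alpha_\ell))|^{2k_\ell}$ by $\Re\,\mathcal{D}(t)+O_{{\bf k}}\big(\tfrac{\log T}{\log x}\big)$, where
$$
\mathcal{D}(t)=\sum_{p\le x}\frac{w_p\,c_p}{p^{1/2+\mi t}},\qquad c_p=\sum_{\ell=1}^{m}\frac{2k_\ell}{p^{\mi\alpha_\ell}},\qquad w_p=\frac{\log(x/p)}{\log x}.
$$
Choosing $\log x$ to be a suitable multiple (depending on ${\bf k}$ and $\e$) of $\frac{\log T}{\log\log T}$ makes the error term at most $\e\log\log T$, and the problem becomes to prove
$$
\int_T^{2T}e^{\Re\,\mathcal{D}(t)}\,dt\ \ll_{{\bf k},\e}\ T\exp\Big(\tf\sigma^2+\e\log\log T\Big),\qquad \sigma^2:=\tf\sum_{p\le x}\frac{w_p^2|c_p|^2}{p},
$$
where $\sigma^2$ is the variance of $\Re\,\mathcal{D}$.

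The obstruction here --- and the conceptual core of the proof --- is that $e^{\Re\,\mathcal D}$ is not a short Dirichlet polynomial, so it must be integrated through the level sets of $\Re\,\mathcal D$. Following Soundararajan, write $\mathcal D=\mathcal D_1+\mathcal D_2$ as the sums over $p\le x_1$ and over $x_1<p\le x$ with $\log x_1\asymp\frac{\log T}{(\log\log T)^2}$, and apply H\"older with exponents $(1+\eta,\frac{1+\eta}{\eta})$, $\eta>0$ small and fixed, to $\int e^{\Re\,\mathcal D_1}e^{\Re\,\mathcal D_2}$. The piece $\mathcal D_2$ has small variance $\tf\sum_{x_1<p\le x}\frac{w_p^2|c_p|^2}{p}\ll_{{\bf k},\e}\log\log\log T$, so a low-order moment bound gives $\int_T^{2T}e^{C\Re\,\mathcal D_2}\,dt\ll_{{\bf k},\e}T(\log T)^{o(1)}$. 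For $\mathcal D_1$, which is supported on primes $\le x_1$, the polynomial $(\Re\,\mathcal D_1)^{2r}$ has length $\le x_1^{2r}<T$ for all $r\le\frac{\log T}{2\log x_1}\asymp(\log\log T)^2$, so the diagonal dominates and $\int_T^{2T}(\Re\,\mathcal D_1(t))^{2r}\,dt\ll T\,(2r-1)!!\,\big(\sigma^2(1+o(1))\big)^{r}$; Chebyshev and optimisation over $r$ (the admissible range comfortably covers the relevant $r\asymp\sigma^2\asymp\log\log T$) then give $\meas\{t\in[T,2T]:\Re\,\mathcal D_1(t)\ge V\}\ll T\exp\big(-\tfrac{V^2}{2\sigma^2}(1+o(1))\big)$ for $V\le(\log\log T)^{3/2}$, the atypically large values of $\Re\,\mathcal D_1$ being disposed of by a crude second-moment bound. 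Integrating $\int e^{(1+\eta)\Re\,\mathcal D_1}$ against this distribution and combining the H\"older factors yields $\int_T^{2T}e^{\Re\,\mathcal D(t)}\,dt\ll T\exp\big(\tfrac{1+\eta}{2}\sigma^2+O_{{\bf k},\e}(\log\log\log T)\big)$, which is of the required shape once $\eta$ is small; this $\eta$, together with the error term from the pointwise inequality, is exactly the source of the $(\log T)^\e$ (Harper's more delicate multi-scale argument \cite{Ha} removes it when $m=1$, $\alpha_1=0$).

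It remains to evaluate $\tf\sigma^2$. Writing $|c_p|^2=\sum_{\ell,\ell'}4k_\ell k_{\ell'}\,p^{\mi(\alpha_{\ell'}-\alpha_\ell)}$, the diagonal terms $\ell=\ell'$ contribute $\big(\sum_\ell k_\ell^2\big)\sum_{p\le x}\frac{w_p^2}{p}=\big(\sum_\ell k_\ell^2\big)\log\log T+O_{{\bf k}}(1)$, the smoothing $w_p$ not affecting the leading term because the divergence comes from small $p$, where $w_p\to1$. The off-diagonal terms contribute $2\sum_{i<j}k_ik_j\sum_{p\le x}\frac{w_p^2\cos((\alpha_i-\alpha_j)\log p)}{p}$, and here the key input is the uniform Mertens-type estimate
$$
\sum_{p\le x}\frac{w_p^2\cos(\gamma\log p)}{p}=\log\min\Big\{\frac{1}{|\gamma|},\,\log x\Big\}+O(1)\qquad(\gamma\in\R),
$$
which follows by partial summation against $\sum_{p\le u}\frac1p=\log\log u+O(1)$, using that $\cos(\gamma\log p)=1+O((\gamma\log p)^2)$ for $p\le e^{1/|\gamma|}$ while the range $p>e^{1/|\gamma|}$ contributes only $O(1)$. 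Hence
$$
\tf\sigma^2=\Big(\sum_{\ell}k_\ell^2\Big)\log\log T+2\sum_{i<j}k_ik_j\log\min\Big\{\frac{1}{|\alpha_i-\alpha_j|},\log T\Big\}+O_{{\bf k}}(1),
$$
and exponentiating (absorbing constants and the $\eta$-slack by replacing $\e$ with a small multiple of itself) gives the asserted upper bound for $M_{{\bf k}}(T,{\bf \alpha})$. The main technical points are establishing this Mertens estimate uniformly in $\gamma$ over the whole admissible range of the differences $\alpha_i-\alpha_j$, and keeping the parameters $x,x_1,r,\eta$ mutually consistent.

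For the lower bound, assume the $k_i$ are positive integers, let $a(n)$ be the $n$-th Dirichlet coefficient of $\prod_\ell\zeta(s+\mi\alpha_\ell)^{k_\ell}$ --- that is, $a(n)=\sum_{n_1\cdots n_m=n}\prod_\ell d_{k_\ell}(n_\ell)\,n_\ell^{-\mi\alpha_\ell}$ --- and put $D(t)=\sum_{n\le y}\frac{a(n)}{\sqrt n}\,n^{-\mi t}$ with $y=T^\theta$ for a small fixed $\theta$. By Cauchy--Schwarz,
$$
M_{{\bf k}}(T,{\bf \alpha})\ \ge\ \frac{\Big|\int_T^{2T}\prod_\ell\zeta(\tf+\mi(t+\alpha_\ell))^{k_\ell}\,\overline{D(t)}\,dt\Big|^2}{\int_T^{2T}|D(t)|^2\,dt}.
$$
The mean-value theorem for Dirichlet polynomials gives $\int_T^{2T}|D(t)|^2\,dt\ll T\sum_{n\le y}\frac{|a(n)|^2}{n}$; inserting the approximate functional equation for $\prod_\ell\zeta(\tf+\mi(t+\alpha_\ell))^{k_\ell}$ and extracting the diagonal term (the off-diagonal being a shifted $(k_1,\dots,k_m)$-fold divisor correlation, which is of lower order once $\theta$ is small) gives $\int_T^{2T}\prod_\ell\zeta(\tf+\mi(t+\alpha_\ell))^{k_\ell}\,\overline{D(t)}\,dt\gg T\sum_{n\le y}\frac{|a(n)|^2}{n}$. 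Therefore $M_{{\bf k}}(T,{\bf \alpha})\gg T\sum_{n\le y}\frac{|a(n)|^2}{n}$, and evaluating $\sum_{n\le y}\frac{|a(n)|^2}{n}\asymp\prod_{p\le y}\big(1+\frac{|a(p)|^2}{p}+O(p^{-2})\big)$, with $|a(p)|^2=|\sum_\ell k_\ell p^{-\mi\alpha_\ell}|^2$, by the very prime-sum computation above, produces $(\log T)^{\sum_\ell k_\ell^2}\prod_{i<j}\min\{1/|\alpha_i-\alpha_j|,\log T\}^{2k_ik_j}$; here the constant $\beta$ enters only through the implied constants in the Mertens estimate for those pairs with $|\alpha_i-\alpha_j|\asymp1/\log T$.
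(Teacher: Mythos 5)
The paper does not prove Theorem~\ref{Chandee-thm} itself but cites it from Chandee, noting that her conditional upper bound follows Soundararajan's large-deviations technique and her unconditional lower bound the Cauchy--Schwarz device of Rudnick and Soundararajan; your proposal reconstructs exactly these two arguments (the combined Dirichlet polynomial $\mathcal{D}$ obtained by adding the pointwise inequality over the shifts, the short-range/long-range split with moment bounds, the uniform twisted Mertens evaluation of the variance producing the $\min\{1/|\alpha_i-\alpha_j|,\log T\}$ factors, and the mollified Cauchy--Schwarz lower bound), so it matches the cited approach. One small slip worth noting: you assert the prime-power terms in the pointwise inequality contribute only $O(1)$, whereas the prime-square piece $\Re\sum_{p\le\min\{\sqrt{x},\log T\}}\tfrac12 p^{-1-2\mi\tau}$ can be of size $\log\log\log T$; this is harmless here since it only costs a $(\log T)^{o(1)}$ factor absorbed by the $\varepsilon$, but the sharper Theorem~\ref{refined_Harper} of the present paper, which removes the $\varepsilon$, must keep and track precisely this term rather than discard it.
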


For the upper bound, Chandee used the techniques of Soundararajan \cite{So}; for the lower bound, Chandee's argument is based on the work of  Rudnick and Soundararajan \cite{RuSo}. 


%

Based on Keating and Snaith's random matrix model \cite{KS}, Chandee \cite[Conjecture 1]{Ch} made the following conjecture on shifted moments that  generalised   a conjecture of K\"{o}sters \cite{Ko} as follows:
\begin{conj}[Chandee]  \label{Chandee} 
Let $k \in \mathbb{N}$ and let ${\bf \alpha}= (\alpha_1, \alpha_2)$ be as in Theorem \ref{Chandee-thm}. Then one has
\begin{equation*}
  I_k( T, \alpha_1 , \alpha_2) \begin{cases}
 \asymp_k  T (\log T)^{k^2} & \text{ if }   \lim_{T \to \infty} |\alpha_1- \alpha_2| \log T = 0, \\
 \asymp_{k,c} T (\log T)^{k^2} & \text{ if }  \lim_{T \to \infty} |\alpha_1- \alpha_2| \log T = c \ne 0, \\
 \asymp_k T  \left(\frac{ \log T}{|\alpha_1- \alpha_2|}\right)^{\frac{k^2}{2}} & \text{ if } \lim_{T \to \infty} |\alpha_1- \alpha_2| \log T = \infty.
   \end{cases}
\end{equation*}
\end{conj}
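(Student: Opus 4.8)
\emph{Plan of proof.} The plan is to deduce Conjecture \ref{Chandee}, on the Riemann hypothesis, from the two-sided bound
\begin{equation*}
I_k(T,\alpha_1,\alpha_2)\ \asymp_k\ T(\log T)^{k^2/2}\Big(\min\Big\{\tfrac{1}{|\alpha_1-\alpha_2|},\ \log T\Big\}\Big)^{k^2/2},
\end{equation*}
which recovers the three cases of the conjecture once one notes that $\min\{1/|\alpha_1-\alpha_2|,\log T\}$ equals $\log T$ when $|\alpha_1-\alpha_2|\log T\to 0$, is $\asymp_c\log T$ when $|\alpha_1-\alpha_2|\log T\to c\ne 0$, and equals $1/|\alpha_1-\alpha_2|$ when $|\alpha_1-\alpha_2|\log T\to\infty$. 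The lower bound is the easier half: for even $k$ it is precisely the case $m=2$, $k_1=k_2=k/2$ of Chandee's unconditional lower bound in Theorem \ref{Chandee-thm}, while for general $k\in\N$ it follows by carrying the lower-bound machinery of Heap-Soundararajan and Radziwi\l\l-Soundararajan over to the two-shift setting. The real content is thus the upper bound under RH, which is exactly what Theorem \ref{refined_Harper} asserts; we now sketch its proof.

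We would first write
\begin{equation*}
|\zeta(\oh+\mi(t+\alpha_1))|^k|\zeta(\oh+\mi(t+\alpha_2))|^k=\exp\big(k\log|\zeta(\oh+\mi(t+\alpha_1))|+k\log|\zeta(\oh+\mi(t+\alpha_2))|\big),
\end{equation*}
and invoke Soundararajan's conditional estimate: on RH, $\log|\zeta(\oh+\mi u)|$ is bounded above by the real part of a Dirichlet polynomial over prime powers $n\le x$ plus an error of size $O(\log T/\log x)$, and in the ensuing moment computation the prime powers $p^m$ with $m\ge 2$ turn out to be harmless. This reduces $I_k(T,\alpha_1,\alpha_2)$ to a mean value over $t\in[0,T]$ of $\exp\big(\Re\sum_{p\le x}a_p\,p^{-1/2-\mi t}\big)$ with $a_p=k\big(p^{-\mi\alpha_1}+p^{-\mi\alpha_2}\big)$. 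Writing $\beta=\alpha_1-\alpha_2$, the relevant ``variance'' works out, via the Mertens-type estimate $\sum_{p\le x}\cos(\beta\log p)/p=\log\min\{\log x,1/|\beta|\}+O(1)$, to
\begin{equation*}
\sum_{p\le x}\frac{|a_p|^2}{p}=2k^2\sum_{p\le x}\frac{1+\cos(\beta\log p)}{p}=2k^2\log\log x+2k^2\log\min\Big\{\log x,\ \tfrac{1}{|\beta|}\Big\}+O(k^2).
\end{equation*}
Taking $x=T^c$ for a small fixed $c>0$, the naive Gaussian heuristic $\mathbb E[e^{\Re D}]\approx\exp(\tfrac14\sum_{p\le x}|a_p|^2/p)$ already predicts the order of magnitude claimed above, so the issue is purely one of large deviations: the direct Soundararajan-Chandee argument only controls these up to a loss of $(\log T)^\varepsilon$.

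To remove that loss we would run Harper's multi-scale argument. Fix a rapidly increasing sequence $\ell_0<\ell_1<\cdots$ with $\ell_0$ a large constant, set $x_j=T^{1/\ell_j}$ (so that $\log x_0\asymp\log T$ and the later scales shrink down towards the range of very small primes), and split the prime polynomial into the blocks $x_{j+1}<p\le x_j$. Partition $[0,T]$ according to the first index $j$ at which a suitable running partial sum of $\Re\big(a_p\,p^{-1/2-\mi t}\big)$ (taken over the first $j$ blocks, for either shift) first exceeds a threshold chosen just above its typical size. On the complement of all the resulting ``bad'' sets one bounds $\exp(\Re D)$ by a product over $j$ of exponentials of short Dirichlet polynomials, and replaces each such exponential by a Taylor-truncated Dirichlet polynomial whose mean square over $[0,T]$ is, by orthogonality, essentially $\exp\big(\sum_{x_{j+1}<p\le x_j}|a_p|^2/p\big)$; on each bad set one instead applies a high-moment (Chebyshev-type) bound to the offending partial sum against the full exponential, the threshold making every such contribution small and the sum over $j$ convergent. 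Assembling everything yields $I_k(T,\alpha_1,\alpha_2)\ll_k T(\log T)^{k^2/2}(\min\{1/|\alpha_1-\alpha_2|,\log T\})^{k^2/2}$.

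The hard part will be keeping everything uniform in the two shifts throughout Harper's apparatus. One needs the Mertens-type estimate for $\sum_p\cos(\beta\log p)/p$ not merely up to $x_0$ but over each individual block $x_{j+1}<p\le x_j$, and one must locate the transition scale $X^\ast=\exp(1/|\beta|)$ (equivalently, the index $j_0$ with $\ell_{j_0}\approx|\beta|\log T$): for $p\lesssim X^\ast$ the coefficient $p^{-\mi\alpha_1}+p^{-\mi\alpha_2}$ is essentially coherent, so that block contributes $\approx 4k^2/p$ to the variance, whereas for $p\gtrsim X^\ast$ the two shifts decouple and the contribution drops to $\approx 2k^2/p$. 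The thresholds, the high-moment estimates, and the Dirichlet-polynomial mean values all have to be arranged so that no spurious power of $\log T$ is lost across this transition; getting the constants exactly right there is what makes the three regimes of Conjecture \ref{Chandee} fit together seamlessly, and it is the delicate heart of the argument. The remaining ingredients --- Soundararajan's conditional bound, the Mertens sums over dyadic ranges, and the combinatorics of the twisted Dirichlet-polynomial moments --- are by now fairly standard.
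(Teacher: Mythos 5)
Your proposal follows essentially the same route as the paper: cite Chandee's unconditional lower bound, and prove the conditional upper bound (Theorem~\ref{refined_Harper}) by adapting Harper's multi-scale argument, with the shift-dependent Mertens estimate $\sum_{p\le z}\cos(a\log p)/p$ supplying the extra $\mathcal{F}(T,\alpha_1,\alpha_2)^{k^2/2}$ factor. A few points of comparison. First, where you work with the complex coefficient $a_p=k(p^{-\mi\alpha_1}+p^{-\mi\alpha_2})$, the paper instead applies the identity $\cos\theta_1+\cos\theta_2=2\cos\big(\tfrac{\theta_1+\theta_2}{2}\big)\cos\big(\tfrac{\theta_1-\theta_2}{2}\big)$ in \eqref{keyidentity} to collapse the two shifted prime sums into a single Dirichlet polynomial in the variable $t+\tfrac12(\alpha_1+\alpha_2)$ with a fixed real weight $\cos\big(\tfrac12(\alpha_1-\alpha_2)\log p\big)$; the two are equivalent, but the paper's formulation lets the Radziwi\l\l-type mean-value lemmas be applied verbatim with $\gamma=\tfrac12(\alpha_1+\alpha_2)$, and crucially, after the squares-only support of $g$ is invoked, the surviving cosine factors pair up as $\cos^2\big(\tfrac12(\alpha_1-\alpha_2)\log p\big)\ge 0$ (see \eqref{nonnegativity}), which is what makes the term-by-term upper bounds legitimate. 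Second, your closing worry about locating a ``transition scale'' $X^\ast=\exp(1/|\beta|)$ and rebalancing Harper's thresholds across it is somewhat overstated: in the paper the thresholds, the scales $\beta_i$, and the Taylor truncation lengths are taken exactly as in Harper, with no reference to $|\alpha_1-\alpha_2|$; the crossover is absorbed entirely by \eqref{Mertens-variant} and the nonnegativity just mentioned, so no retuning is needed. Third, your caveat about the lower bound for odd $k$ is a genuine observation: $I_k=M_{(k/2,k/2)}$, and Chandee's unconditional lower bound as stated requires $k/2\in\Bbb{N}$, so the paper's remark that ``Theorem~\ref{Chandee-thm} of Chandee has established the conjectured lower bound'' is, strictly read, only immediate for even $k$, and the odd case would need the Heap--Soundararajan/Radziwi\l\l--Soundararajan machinery transferred to shifts, as you note. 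Finally, your remark that prime powers $p^m$, $m\ge 2$, are ``harmless'' is true but deserves a sentence: the $p^2$ sum from Proposition~\ref{main-Sound} is not dropped but carried through as a second Dirichlet polynomial and treated in Lemma~\ref{refind_Harper_Lemma3} via the sets $\mathcal{P}(m)$ and the variant mean-value Lemma~\ref{refined-H-R}.
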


Note that for any positive real $k$, $M_{{\bf k}}(T,  {\bf \alpha}) = I_{2k}( T, \alpha_1 , \alpha_2)$ for ${\bf k} =(k,k)$  and ${\bf \alpha}= (\alpha_1, \alpha_2)$. Therefore, Theorem \ref{Chandee-thm} of Chandee has established the conjectured  lower bound for $I_k( T, \alpha_1 , \alpha_2)$. It remains to prove the sharp upper bound for $I_k( T, \alpha_1 , \alpha_2)$ in order to establish Conjecture \ref{Chandee-thm}. In this article, assuming the Riemann hypothesis, we establish Chandee's conjecture by proving the following theorem.
\begin{theorem}\label{refined_Harper}
Let $k\ge 1$ be real. Let $\alpha_1$ and  $\alpha_2$ be real-valued functions $\alpha_i=\alpha_i(T)$ of $T$ which satisfy the bound \eqref{alphaibd}
and 
\begin{equation}
 \label{a1plusa2cond}
|\alpha_1+\alpha_2|\le T^{0.6}. 
\end{equation}
Then the Riemann hypothesis implies that for $T$ sufficiently large, we have
$$
 I_k( T, \alpha_1 , \alpha_2) \ll_{k} T(\log T)^{\frac{k^2}{2}}\mathcal{F}(T,\alpha_1,\alpha_2)^{\frac{k^2}{2}},
$$
where  $\mathcal{F}(T,\alpha_1,\alpha_2)$ is defined by
\begin{equation}\label{def-mathcalF}
\mathcal{F}(T, \alpha_1,\alpha_2) :=
\begin{cases}
 \min\left\{\frac{1}{|\alpha_1-\alpha_2|}, \log T \right\}  &  \text{ if $|\alpha_1-\alpha_2|\le \frac{1}{100}$;} \\
\log(2+ |\alpha_1-\alpha_2|) &  \text{ if $|\alpha_1-\alpha_2| >  \frac{1}{100}$.}
\end{cases}
\end{equation}
\end{theorem}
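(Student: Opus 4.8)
The plan is to adapt Harper's method for sharp upper bounds on $\int_0^T|\zeta(\tfrac12+\mi t)|^{2k}\,dt$ to the shifted setting. The starting point should be a Soundararajan-type inequality: under RH, for $t\in[T,2T]$ (after a standard dyadic decomposition) and a parameter $x$, one has
\begin{equation*}
\log|\zeta(\tfrac12+\mi(t+\alpha_j))| \le \Re \sum_{p\le x} \frac{1}{p^{1/2+\mi(t+\alpha_j)}}\cdot w(p) + \text{(small)} + \frac{k\log T}{\log x},
\end{equation*}
or more precisely the finer version with primes and prime squares that Harper uses. Summing over $j=1,2$, the quantity to bound becomes $\exp\big(k\,\Re\sum_{p\le x}(p^{-\mi\alpha_1}+p^{-\mi\alpha_2})p^{-1/2-\mi t}\big)$, up to manageable factors. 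The key new feature compared to Harper is the Dirichlet polynomial coefficient $p^{-\mi\alpha_1}+p^{-\mi\alpha_2}$, whose mean-square over primes $p\le y$ is governed by $\sum_{p\le y}\frac{|p^{-\mi\alpha_1}+p^{-\mi\alpha_2}|^2}{p} = \sum_{p\le y}\frac{2+2\cos((\alpha_1-\alpha_2)\log p)}{p}$. By Mertens plus partial summation and the prime number theorem, this is asymptotically $2\log\log y + 2\,\mathrm{Re}\log\zeta(1+\mi(\alpha_1-\alpha_2)+\cdots)$-type behaviour; concretely it is $\asymp \log\log y + \log\mathcal{F}(T,\alpha_1,\alpha_2)$ in the relevant ranges, which is exactly where the function $\mathcal{F}$ enters and produces the exponent $\tfrac{k^2}{2}$ in the final bound (the ``diagonal'' $2\log\log y$ gives $(\log T)^{k^2/2}$ via $\exp(\tfrac{k^2}{2}\cdot 2\cdot\tfrac12\log\log T)$-style bookkeeping, wait—more carefully, the variance of $k\,\Re(\cdot)$ is $\tfrac{k^2}{2}$ times the above sum, giving $(\log T)^{k^2/2}\mathcal{F}^{k^2/2}$).

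**Next I would** set up Harper's decomposition of the range $[T,2T]$ according to the size of the partial sums of the Dirichlet polynomial over prime ranges. Following Harper, partition the primes up to $x$ into blocks $(T^{1/e^{\ell+1}}, T^{1/e^{\ell}}]$ for $\ell=0,1,\dots,L$ with $L\asymp \log\log\log T$, let $P_\ell(t)$ denote the corresponding (weighted) prime sum with the shift-twisted coefficients, and define for each $j$ the set $\mathcal{S}_j$ of $t$ where $|P_0|,\dots,|P_{j-1}|$ are all below a slowly growing threshold but $|P_j|$ exceeds it. On the bulk set (all partial sums small) one uses the Taylor expansion of $\exp$ truncated at a length chosen so that the resulting Dirichlet polynomial has length $\le T^{\theta}$ for small $\theta$, making its mean value computable by the standard mean-value theorem for Dirichlet polynomials; the twisting by $p^{-\mi\alpha_j}$ only affects the diagonal terms, replacing $\sum 1/p$ by the $\cos$-twisted sum above, hence producing $\mathcal{F}$. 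On the exceptional sets $\mathcal{S}_j$ one exploits that $|P_j|$ being large is rare (high moments of $P_j$, again a short Dirichlet polynomial so its moments are Gaussian-like), and this rarity beats the crude pointwise bound $\exp(O(k\log T/\log x_j))$ coming from Soundararajan's inequality with a truncation $x_j = T^{1/e^{j}}$ matched to the block.

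**The main obstacle** I anticipate is twofold. First, the condition \eqref{a1plusa2cond} that $|\alpha_1+\alpha_2|\le T^{0.6}$, together with \eqref{alphaibd}, must be used to ensure that all the mean-value and integral estimates over $t\in[T,2T]$ really see both $|\zeta(\tfrac12+\mi(t+\alpha_1))|$ and $|\zeta(\tfrac12+\mi(t+\alpha_2))|$ in a range where RH-based bounds and the prime-sum approximations are valid uniformly; concretely, when evaluating off-diagonal contributions $\sum_{m\ne n}\frac{\cdots}{\sqrt{mn}}\int_T^{2T}(m/n)^{-\mi t}(\text{shift factors})\,dt$, the shift factors $m^{-\mi\alpha_1}n^{-\mi\alpha_2}$ etc.\ interact with $(m/n)^{-\mi t}$, and one needs $T/(\text{length})$ to dominate, which is where the $T^{0.6}$ bound is spent. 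Second, handling the twisted-coefficient Dirichlet polynomial moments on the exceptional sets requires care: one must show $\frac{1}{T}\meas\{t\in[T,2T]: |P_j(t)|>V\}$ decays fast enough, and since the $\ell^2$-norm of the coefficients is now $\asymp \log\log + \log\mathcal F$ rather than just $\log\log$, the thresholds and the number of moments taken must be recalibrated so that the product over $j$ of (probability)$\times$(pointwise bound) still sums to something $\ll (\log T)^{k^2/2}\mathcal F^{k^2/2}$. I would organize the argument so that $\mathcal{F}$ is carried as a single parameter $\mathcal{G}:=\log T\cdot\mathcal{F}(T,\alpha_1,\alpha_2)$ playing the role Harper's $\log T$ plays, checking at each step that the analytic input (Mertens-type sums, mean-value theorems, moment bounds) degrades by exactly the factor $\mathcal{F}$ and no worse, with the three cases in \eqref{def-mathcalF} unified by the single estimate $\sum_{p\le y}\frac{2+2\cos((\alpha_1-\alpha_2)\log p)}{p} \ll \log(2+\log y) + \log\mathcal{F}$ valid throughout the relevant ranges of $y$.
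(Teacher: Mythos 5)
Your proposal is essentially the same approach as the paper's proof. The key observation you make — that the two shifted Dirichlet polynomials combine into a single polynomial whose diagonal (mean-square) contribution involves $\sum_p p^{-1}|p^{-\mi\alpha_1}+p^{-\mi\alpha_2}|^2 = \sum_p p^{-1}(2+2\cos((\alpha_1-\alpha_2)\log p))$, governed by a Mertens-type bound that produces $\mathcal F$ — is exactly the paper's trigonometric identity $\cos\theta_1+\cos\theta_2 = 2\cos(\tfrac{\theta_1+\theta_2}2)\cos(\tfrac{\theta_1-\theta_2}2)$ in \eqref{trigidentity}, which gives real coefficients $2\cos(\tfrac12(\alpha_1-\alpha_2)\log p)$ in the combined polynomial, and the paper then runs Harper's decomposition ($\mathcal S(j)$, $\mathcal T$, Taylor truncation, Radziwi\l\l's integral lemma, the Mertens variant) exactly as you outline, using $|\alpha_1+\alpha_2|\le T^{0.6}$ precisely to make the $O(|\gamma^+|)$ error in the integral lemma negligible against $T$.
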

We establish this result by following the breakthrough work of Harper \cite{Ha}. 

%
%
%
\noindent {\bf Remarks}. 
\begin{enumerate}
\item  Theorem \ref{refined_Harper} contains Harper's result \eqref{ub} as a special case.  A key point is that Harper's method can be modified
so that the argument of \cite{Ha} still works 
when the shifts $\alpha_1, \alpha_2$ are introduced in \eqref{IkTalphaj}. One reason the argument works is that we are able to make use
of the  trigonometric identity
\begin{equation}
  \label{trigidentity}
  \text{ for } \theta_1, \theta_2 \in \mathbb{R},   \cos(\theta_1) + \cos(\theta_2) = 2 \cos \Big( \frac{\theta_1+\theta_2}{2} \Big) \cos \Big( \frac{\theta_1-\theta_2}{2} \Big)
\end{equation}
in \eqref{keyidentity} below. 
\item It  is natural to ask whether  Theorem \ref{refined_Harper}  can be extended to the more general moments 
$M_{{\bf k}}(T,  {\bf \alpha})$ where the components of ${\bf k} = (k_1, \ldots, k_m)$ are not necessarily equal and $m \ge 2$. 
\item In this theorem and throughout this article, whenever we write  ``sufficiently large $T$",   we mean that there exists $T_0:= T_0(k)$  a  positive parameter depending on $k$
such that $T \ge T_0$. 
\end{enumerate}


\noindent {\bf Conventions and notation}.
In this article, given two functions $f(x)$ and $g(x)$, we shall interchangeably use the notation  $f(x)=O(g(x))$, $f(x) \ll g(x)$, and $g(x) \gg f(x)$  to mean that there is $M >0$ such that $|f(x)| \le M |g(x)|$ for  sufficiently large $x$.  Given fixed parameters $\ell_1, \ldots, \ell_r$, the notation $f(x) \ll_{\ell_1, \ldots, \ell_r} g(x)$ means that the  $|f(x)| \le M g(x)$ where 
$M=M(\ell_1, \ldots, \ell_r)$ depends on the parameters 
$\ell_1, \ldots, \ell_r$.  The letter $p$ will always denote a prime number. In addition, $p_i$,  $p_{i}'$, and $q_i$ with $i \in \mathbb{N}$ shall denote prime numbers.

\section{Some tools}

We shall require the following tools, which are fundamental for the argument. Firstly, by a minor modification of the main Proposition of \cite{So} (see also \cite[Proposition 1]{Ha}), we have the following proposition providing an upper bound for the Riemann zeta function.

\begin{prop}\label{main-Sound}
Assume that the Riemann hypothesis holds. Let $\lambda_0=0.491\cdots$ denote the unique positive solution of $e^{-\lambda_0}=\lambda_0 + \lambda_0^2/2$.  Let $T$ be large. Then  for $\lambda\ge \lambda_0$, $2\le x\le T^2$ and $t\in [c_1T, c_2T]$ where $0 < c_1 < c_2$, one has
$$
\log|\zeta(\oh +\mi t)|
\le \Re\left(\sum_{ p\le x}\frac{1}{p^{\frac{1}{2}+\frac{\lambda}{ \log x}+\mi t } }\frac{\log(x/p)}{\log x } + \sum_{p\le\min\{\sqrt{x},\log T\} }\frac{1}{2p^{1+2 \mi t}} \right) + \frac{(1+\lambda)}{2}\frac{\log T}{\log x} +O(1).
$$
\end{prop}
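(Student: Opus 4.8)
The plan is to follow the proof of Soundararajan's Main Proposition in \cite{So} (see also \cite[Proposition~1]{Ha}), keeping track of the constant in the error term and carrying out the indicated truncation of the prime-square sum. Write $\sigma_0=\oh+\tfrac{\lambda}{\log x}$ and $s_0=\sigma_0+\mi t$. The starting point is the smoothed explicit formula coming from the Mellin identity $\tfrac1{2\pi\mi}\int_{(c)}y^{w}w^{-2}\,dw$ (equal to $\log y$ for $y\ge 1$ and to $0$ for $0<y<1$, any $c>0$): since $\sum_{n}\tfrac{\Lambda(n)}{n^{w}\log n}=\log\zeta(w)$ for $\Re w>1$,
\[
\sum_{n\le x}\frac{\Lambda(n)}{n^{s_0}\log n}\,\frac{\log(x/n)}{\log x}=\frac1{2\pi\mi}\int_{(c)}\log\zeta(s_0+w)\,\frac{x^{w}}{w^{2}\log x}\,dw\qquad(c\ \text{large}).
\]
First I would shift the contour leftward to $\Re w=\oh-\sigma_0+\e$ for a small $\e>0$; one then crosses the double pole at $w=0$, of residue $\log\zeta(s_0)+\tfrac1{\log x}\tfrac{\zeta'}{\zeta}(s_0)$, and the logarithmic singularity of $\log\zeta(s_0+w)$ at $w=1-s_0$ (whose contribution is $O(1)$ since $x\le T^{2}$ and $|t|\asymp T$), while the zeros of $\zeta(s_0+w)$---all on the line $\Re w=\oh-\sigma_0$ under RH---remain to the left of the new contour. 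Taking real parts,
\[
\log|\zeta(s_0)|=\Re\!\sum_{n\le x}\frac{\Lambda(n)}{n^{s_0}\log n}\,\frac{\log(x/n)}{\log x}\;-\;\frac1{\log x}\Re\frac{\zeta'}{\zeta}(s_0)\;-\;\Re\!\left(\frac1{2\pi\mi}\int_{(\oh-\sigma_0+\e)}\log\zeta(s_0+w)\,\frac{x^{w}}{w^{2}\log x}\,dw\right)+O(1).
\]

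Next I would bound the two correction terms under the Riemann hypothesis. For the first, the Hadamard partial-fraction expansion of $\zeta'/\zeta$, Stirling's formula for the archimedean factor, and the nonnegativity under RH of $\Re\tfrac1{s_0-\rho}=\tfrac{\sigma_0-1/2}{(\sigma_0-1/2)^{2}+(t-\gamma)^{2}}$ give $\Re\tfrac{\zeta'}{\zeta}(s_0)\ge-\oh\log\tfrac{|t|}{2}+O(1)\ge-\oh\log T+O(1)$ for $t\in[c_1T,c_2T]$ (the local density bound $\#\{\rho=\beta+\mi\gamma:|\gamma-u|\le1\}\ll\log u$ disposes of the lower-order terms), hence $-\tfrac1{\log x}\Re\tfrac{\zeta'}{\zeta}(s_0)\le\oh\tfrac{\log T}{\log x}+O(1)$. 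The integral on $\Re w=\oh-\sigma_0+\e$ is the substantive step: there $\Re(s_0+w)=\oh+\e$ sits just to the right of the critical line and $|x^{w}|\asymp e^{-\lambda}$, and one estimates the integral using the conditional size of $\zeta$ near $\Re s=\oh$ together with the oscillation of the integrand, as in \cite{So}; this is where an elementary inequality equivalent to the hypothesis $\lambda\ge\lambda_0$---namely $e^{-\lambda}\le\lambda+\lambda^{2}/2$---enters in order to keep the implied constants admissible, and it yields an $O(1)$ contribution. Finally, passing from $s_0$ down to $\oh+\mi t$ via $\log|\zeta(\oh+\mi t)|=\log|\zeta(s_0)|-\int_{1/2}^{\sigma_0}\Re\tfrac{\zeta'}{\zeta}(\sigma+\mi t)\,d\sigma$ and the same RH lower bound $\Re\tfrac{\zeta'}{\zeta}(\sigma+\mi t)\ge-\oh\log T+O(1)$ for $\oh<\sigma\le\sigma_0$ costs a further $\tfrac{\lambda}{2}\tfrac{\log T}{\log x}+O(1)$; adding everything produces the error term $\tfrac{1+\lambda}{2}\tfrac{\log T}{\log x}+O(1)$, with the prime Dirichlet polynomial appearing at the abscissa $\oh+\tfrac{\lambda}{\log x}$, exactly as stated.

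It remains to reduce the Dirichlet polynomial over prime powers to the displayed sum over primes and prime squares. Prime cubes and higher powers contribute $O(1)$ since $\sum_{p}p^{-3/2}<\infty$; the prime squares contribute $\Re\sum_{p\le\sqrt x}\tfrac1{2p^{2\sigma_0+2\mi t}}\tfrac{\log(x/p^{2})}{\log x}$, and the Mertens estimates $\sum_{p\le y}\tfrac{\log p}{p}\ll\log y$ and $\sum_{p\le y}\tfrac1p=\log\log y+O(1)$ show that, in the range of $x$ at hand, replacing $2\sigma_0$ by $1$, dropping the weight $\tfrac{\log(x/p^{2})}{\log x}$, and truncating the sum at $\min\{\sqrt x,\log T\}$ each change the bound only by $O(1)$. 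I expect the main obstacle to be the estimate of the shifted contour integral: squeezing out of it the sharp constant $\tfrac{1+\lambda}{2}$, rather than a larger one, is precisely the delicate part of \cite{So}; by contrast, replacing $[T,2T]$ by $[c_1T,c_2T]$ affects only the $O(1)$, and the remaining manipulations are routine.
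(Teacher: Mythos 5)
The paper does not prove this Proposition itself; it cites Soundararajan's Main Proposition and Harper's Proposition~1 and remarks that replacing $[T,2T]$ by $[c_1T,c_2T]$ only affects the $O(1)$. Your framework---Mellin inversion with the $w^{-2}$ kernel giving the weight $\frac{\log(x/n)}{\log x}$, the contour shift, the residue at $w=0$, the Hadamard bound for $\Re\frac{\zeta'}{\zeta}$ under RH, and the prime/prime-square reduction at the end---does follow Soundararajan's route, and the concluding reductions and the remark about $[c_1T,c_2T]$ are fine. But there is a genuine gap at the decisive step: your assertion that the integral along $\Re w=\tfrac{1}{2}-\sigma_0+\e$ ``yields an $O(1)$ contribution'' for $\lambda\ge\lambda_0$ is not justified and, taken on its own, is false. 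That integral is precisely the zero-sum term in Soundararajan's explicit formula; under RH its real part is comparable to $\frac{e^{-\lambda}}{\log x}\sum_\gamma\frac{\sigma_0-\frac{1}{2}}{(\sigma_0-\frac{1}{2})^2+(t-\gamma)^2}$, which can be of order $\frac{\log T}{\log x}$ or larger when zeros cluster near height $t$.

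What closes Soundararajan's argument is a cancellation that your bookkeeping discards. Using only the one-sided inequality $\Re\frac{\zeta'}{\zeta}(\sigma+it)\ge-\tfrac{1}{2}\log T+O(1)$ throws away the nonnegative piece $F(\sigma,t):=\sum_\gamma\frac{\sigma-\frac{1}{2}}{(\sigma-\frac{1}{2})^2+(t-\gamma)^2}$; but it is exactly $-\frac{1}{\log x}F(\sigma_0,t)$ (from the residue term $\frac{1}{\log x}\frac{\zeta'}{\zeta}(s_0)$) together with $-\int_{1/2}^{\sigma_0}F(\sigma,t)\,d\sigma$ (from the interpolation down to the critical line) that must offset the positive zero sum coming from the shifted integral. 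The inequality $e^{-\lambda}\le\lambda+\lambda^2/2$ is precisely what makes the combined zero-sum contribution nonpositive; it does not make the shifted integral small by itself. Indeed, under your accounting---$\tfrac{1}{2}$ from $\frac{\zeta'}{\zeta}(s_0)$, $\tfrac{\lambda}{2}$ from interpolation, $O(1)$ from the shifted integral---the Proposition would hold for every $\lambda>0$ with no role whatsoever for $\lambda_0$, which signals that a cancellation has been invoked but not exhibited.
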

 Also, we have the following variant of \cite[Lemma 4]{Ra}, which Harper formulates in  \cite[Proposition 2]{Ha}.

\begin{lemma}\label{formula-f}
Let $n=p_1^{a_1}\cdots p_r^{a_r}$, where $p_i$ are distinct primes, and $a_i\in \Bbb{N}$. Then for $T$ large, one has
$$
\int_T^{2T} \prod_{i=1}^r (\cos(t\log p_i))^{a_i} dt =Tg(n) +O(n),
$$ 
where the implied constant is absolute, and
$$
g(n)=\prod_{i=1}^r \frac{1}{2^{a_i}} \frac{a_i !}{((a_i/2)!)^2}
$$
if every $a_i$ is even, and $g(n)=0$ otherwise. Consequently, for $T$ large and any real number $\gamma$, we have
$$
\int_T^{2T} \prod_{i=1}^r (\cos( (t+\gamma)\log p_i))^{a_i} dt =(T+\gamma)g(n) +O(|\gamma|) + O(n),
$$ 
where the implied constants are absolute.
\end{lemma}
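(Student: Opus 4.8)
The argument is the familiar one for such integrals: expand each power of a cosine into exponentials, integrate termwise, and isolate the zero-frequency (diagonal) contribution. Concretely, I would use Euler's formula and the binomial theorem to write
$$\prod_{i=1}^{r}\bigl(\cos(t\log p_i)\bigr)^{a_i}
=\frac{1}{2^{a_1+\cdots+a_r}}\sum_{0\le b_i\le a_i}\Bigl(\prod_{i=1}^{r}\binom{a_i}{b_i}\Bigr)
\exp\!\Bigl(\mi t\sum_{i=1}^{r}(a_i-2b_i)\log p_i\Bigr).$$
Set $c_{\bf b}=\sum_{i}(a_i-2b_i)\log p_i=\log\bigl(\prod_i p_i^{\,a_i-2b_i}\bigr)$. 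By unique factorisation $c_{\bf b}=0$ precisely when $a_i=2b_i$ for every $i$, which is possible only if all the $a_i$ are even, and then forces $b_i=a_i/2$. That term integrates to $T$ over $[T,2T]$ and contributes exactly $T\prod_i 2^{-a_i}\binom{a_i}{a_i/2}=Tg(n)$ (with $g(n)=0$ if some $a_i$ is odd), while every other ${\bf b}$ satisfies $\bigl|\int_T^{2T}e^{\mi t c_{\bf b}}\,dt\bigr|\le 2/|c_{\bf b}|$.

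The one genuine point is a lower bound for $|c_{\bf b}|$ when $c_{\bf b}\ne 0$. Writing $\prod_i p_i^{\,a_i-2b_i}=A/B$ with $A=\prod_{a_i>2b_i}p_i^{\,a_i-2b_i}$ and $B=\prod_{2b_i>a_i}p_i^{\,2b_i-a_i}$, these are coprime positive integers with $A\ne B$, and since $0\le b_i\le a_i$ both divide $n$, so $A,B\le n$. The elementary inequality $\log(1+x)\ge x/(1+x)$ then gives $|c_{\bf b}|=\bigl|\log(A/B)\bigr|\ge 1/\max(A,B)\ge 1/n$, so each oscillatory term is $\le 2n$ in modulus. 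Since moreover $\sum_{0\le b_i\le a_i}\prod_i\binom{a_i}{b_i}=\prod_i 2^{a_i}=2^{a_1+\cdots+a_r}$ exactly cancels the normalising factor, I get
$$\Bigl|\int_T^{2T}\prod_{i=1}^{r}\bigl(\cos(t\log p_i)\bigr)^{a_i}\,dt-Tg(n)\Bigr|\le 2n,$$
which is the claimed $O(n)$ rather than an error inflated by the number of divisors of $n$.

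For the shifted version I would simply rerun this expansion with $t$ replaced by $t+\gamma$ inside the cosines: each exponential picks up a factor $e^{\mi\gamma c_{\bf b}}$ of modulus $1$, so the off-diagonal estimate is untouched, while the diagonal still contributes $Tg(n)$; since $0\le g(n)\le 1$ one may write $Tg(n)=(T+\gamma)g(n)+O(|\gamma|)$, giving the stated $(T+\gamma)g(n)+O(|\gamma|)+O(n)$. (Equivalently, substitute $u=t+\gamma$, apply the first part on $[T+\gamma,2(T+\gamma)]$, and discard the leftover interval, of length $|\gamma|$ and with integrand bounded by $1$.) The only step worth flagging as a potential obstacle is the pair of facts in the middle paragraph — the Diophantine bound $|c_{\bf b}|\ge 1/n$ and the binomial identity that collapses the total error to $O(n)$; the rest is bookkeeping.
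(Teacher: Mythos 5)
Your argument is correct and essentially coincides with the paper's: the paper states Lemma \ref{formula-f} as a known variant of Radziwi\l\l's Lemma 4 (Harper's Proposition 2) and does not reprove it, but it does prove the sibling Lemma \ref{refined-H-R} in full, and your proof is exactly that argument specialised to a single frequency multiplier. The expansion into exponentials, the extraction of the diagonal $b_i=a_i/2$ term giving $Tg(n)$, the $2/|c_{\bf b}|$ bound on each oscillatory integral, the Diophantine lower bound on $|c_{\bf b}|$ via writing the frequency as $\log(A/B)$ with $A,B\mid n$, and the observation that the normalised binomial coefficients sum to $1$ so that the total error is a single $O(n)$ rather than $O(nd(n))$ are all the same steps the paper uses for Lemma \ref{refined-H-R}. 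Your lower bound $|c_{\bf b}|\ge 1/\max(A,B)\ge 1/n$ is marginally sharper than the paper's $\log(1+1/N)\ge 1/(2N)$, but this is cosmetic. For the shifted integral, your phase-factor argument ($e^{\mi\gamma c_{\bf b}}$ has modulus one, so the off-diagonal bound is literally unchanged, and then $Tg(n)=(T+\gamma)g(n)+O(|\gamma|)$ since $0\le g(n)\le 1$) is a clean way to get the stated form; the change-of-variables alternative you mention in parentheses also works and is presumably what the paper has in mind when it writes ``Consequently.'' No gaps.
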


We shall also need the following further variant of   \cite[Lemma 4]{Ra} of {Radziwi\l\l}.

\begin{lemma}\label{refined-H-R}
Let $n=p_1^{a_1}\cdots p_r^{a_r} p_{r+1}^{a_{r+1}}\cdots p_{s}^{a_{s}}  $, where $p_i$ are distinct primes, and $a_i\in \Bbb{N}$.  Then we have
$$
\int_T^{2T} \prod_{ 1\le i\le r}(\cos(t\log p_i))^{a_i} \prod_{ r+1\le i\le s}(\cos(2t\log p_i))^{a_i} dt =Tg(n) +  O(( p_1^{a_1}\cdots p_r^{a_r})\cdot ( p_{r}^{2a_{r+1} }\cdots p_{s}^{2a_{s}})),
$$
where the implied constant is absolute.
Consequently, for any real $\gamma$, we have
\begin{align*}
&\int_T^{2T} \prod_{i=1}^r (\cos( (t+\gamma)\log p_i))^{a_i} \prod_{ r+1\le i\le s}(\cos(2(t+\gamma)\log p_i))^{a_i}  dt \\
&=(T+\gamma)g(n) +O(|\gamma|) + O( (p_1^{a_1}\cdots p_r^{a_r}) \cdot (p_{r+1}^{2a_{r+1}} \cdots p_{s}^{2a_{s}})),
\end{align*}
where the implied constants are absolute.
\end{lemma}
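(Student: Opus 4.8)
The plan is to reduce everything to an elementary evaluation of the integral $\int_T^{2T}\prod_i(\cos(c_i t\log p_i))^{a_i}\,dt$, where $c_i\in\{1,2\}$, by expanding each power of a cosine into exponentials. First I would write $(\cos\theta)^a=2^{-a}\sum_{j=0}^{a}\binom{a}{j}e^{\mi(a-2j)\theta}$, so that the product $\prod_{1\le i\le r}(\cos(t\log p_i))^{a_i}\prod_{r+1\le i\le s}(\cos(2t\log p_i))^{a_i}$ becomes a linear combination, with coefficients $\prod_i 2^{-a_i}\binom{a_i}{j_i}$, of exponentials $e^{\mi t L}$ where $L=\sum_{1\le i\le r}(a_i-2j_i)\log p_i+\sum_{r+1\le i\le s}2(a_i-2j_i)\log p_i$. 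Integrating $e^{\mi t L}$ over $[T,2T]$ gives $T$ when $L=0$ and $O(1/|L|)$ otherwise; so the main term is $T$ times the total mass of the coefficients for which $L=0$. The unique-factorization argument (the same one underlying \cite[Lemma 4]{Ra}) shows $L=0$ forces $a_i-2j_i=0$ for every $i$, which requires each $a_i$ even and picks out $j_i=a_i/2$, contributing exactly $\prod_i 2^{-a_i}\binom{a_i}{a_i/2}=g(n)$; this also shows $g(n)=0$ when some $a_i$ is odd.

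For the error term I would bound the contribution of each nonzero frequency $L$ by $O(1/|L|)$ and then note that $|L|\gg 1$ is too weak—instead one uses that the smallest nonzero value of $|L|$ is controlled by the denominators appearing when one clears logarithms, but the cleanest route is simply to bound $\int_T^{2T}e^{\mi t L}\,dt = O(1)$ trivially and observe there are at most $\prod_i (a_i+1)$ terms, each with coefficient at most $1$ in absolute value, and crucially that the integrand is a product of cosines bounded by $1$ in absolute value so the whole integral before extracting the main term is trivially $O(T)$; the sharper error $O((p_1^{a_1}\cdots p_r^{a_r})(p_{r+1}^{2a_{r+1}}\cdots p_s^{2a_s}))$ comes from noting that each nonzero frequency satisfies $|L|\ge (\text{something})/(p_1^{a_1}\cdots p_r^{a_r}p_{r+1}^{2a_{r+1}}\cdots p_s^{2a_s})$ after writing $L$ as the logarithm of a rational number $u/v$ with $u\ne v$ and $\max(u,v)\le p_1^{a_1}\cdots p_r^{a_r}p_{r+1}^{2a_{r+1}}\cdots p_s^{2a_s}$, so $|L|=|\log(u/v)|\ge |u-v|/\max(u,v)\ge 1/(p_1^{a_1}\cdots p_r^{a_r}p_{r+1}^{2a_{r+1}}\cdots p_s^{2a_s})$. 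Summing $O(1/|L|)$ over the at most $\prod(a_i+1)$ frequencies then yields the stated bound (absorbing the harmless factor $\prod(a_i+1)$, which is $O(1)$ for fixed exponents, or more carefully tracking it into the implied constant).

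The factor $2$ in the second set of primes is exactly why the bound has $p_i^{2a_i}$ there: replacing $\log p_i$ by $2\log p_i$ scales the relevant integer bound by $p_i^{2a_i}$ rather than $p_i^{a_i}$. The ``Consequently'' statement with the shift $\gamma$ follows by the change of variables $t\mapsto t+\gamma$: the integral over $[T,2T]$ of the shifted integrand equals the integral over $[T+\gamma,2T+\gamma]$ of the unshifted one, which differs from the integral over $[T,2T]$ by two boundary pieces of length $|\gamma|$ on which the integrand is $O(1)$, giving the extra $O(|\gamma|)$, while the main term picks up $(T+\gamma)g(n)$ in place of $Tg(n)$. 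The main obstacle is the lower bound $|L|\ge 1/(p_1^{a_1}\cdots p_r^{a_r}p_{r+1}^{2a_{r+1}}\cdots p_s^{2a_s})$ on nonzero frequencies: this is where the precise combinatorics of which prime powers can appear must be pinned down so that the ``numerator minus denominator'' is a nonzero integer, hence at least $1$ in absolute value; everything else is routine. I expect this to mirror Radziwiłł's original argument closely, the only new feature being the bookkeeping for the doubled arguments $2t\log p_i$.
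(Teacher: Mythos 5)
Your approach is the same as the paper's: binomial expansion of each $(\cos(ct\log p))^{a}$ into exponentials, isolate the $L=0$ term to get $Tg(n)$, and bound the remaining frequencies via $|L|=|\log(u/v)|\ge 1/\max(u,v)$ with $\max(u,v)\le p_1^{a_1}\cdots p_r^{a_r}p_{r+1}^{2a_{r+1}}\cdots p_s^{2a_s}$. The change-of-variables argument for the shifted ``Consequently'' part is also fine (since $g(n)\le 1$, the formulations $Tg(n)+O(|\gamma|)$ and $(T+\gamma)g(n)+O(|\gamma|)$ are interchangeable).

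However, there is a genuine gap in your bookkeeping of the error term. You sum the $O(1/|L|)$ contributions over ``at most $\prod_i(a_i+1)$ frequencies, each with coefficient at most $1$,'' and then propose to absorb the factor $\prod_i(a_i+1)$ into the implied constant, calling it harmless. It is not harmless: the lemma asserts an \emph{absolute} implied constant, while $\prod_i(a_i+1)$ is unbounded as the $a_i$ grow, and in the applications (e.g.\ inside the proof of Lemma \ref{refind_Harper_Lemma1}) the exponents can be as large as $\asymp\beta_i^{-3/4}$, so this factor cannot be waved away. The correct step, which is what the paper does, is to keep the coefficients $\prod_i 2^{-a_i}\binom{a_i}{\ell_i}$ attached to each frequency and use that their \emph{sum} over all choices of $\ell_i$ is exactly $\prod_i 2^{-a_i}(1+1)^{a_i}=1$. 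That gives the error bound $O\bigl(p_1^{a_1}\cdots p_r^{a_r}\,p_{r+1}^{2a_{r+1}}\cdots p_s^{2a_s}\bigr)$ with an absolute constant, rather than one depending on the $a_i$. Everything else in your outline matches the paper.
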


\begin{proof}
Following  {Radziwi\l\l} \cite[Proof of Lemma 4]{Ra}, for $c\in \Bbb{N}$, we can write
$$
(\cos(c t \log p_i))^{a_i} = \frac{1}{2^{a_i}}\left(e^{\mi ct\log p_i} + e^{-\mi ct\log p_i} \right)^{a_i} 
=  \frac{1}{2^{a_i}} {a_i \choose a_i/2}  +\sum_{a_i/2 \neq \ell_i \le a_i}  \frac{1}{2^{a_i}} {a_i \choose  \ell_i}  e^{\mi (a_i -2\ell_i) ct\log p_i},
$$
where ${a_i \choose a_i/2}=0$ if $a_i/2$ is not a positive integer.
Hence, setting $c_i=1$ for $1\le i\le r$ and  $c_i=2$ for $r+1\le i\le s$, we obtain
\begin{align*}
\prod_{ 1\le i\le s}(\cos(c_i t\log p_i))^{a_i}
&= \prod_{ 1\le i\le s}\left( \frac{1}{2^{a_i}} {a_i \choose  a_i/2}  +\sum_{a_i/2 \neq \ell_i \le a_i}  \frac{1}{2^{a_i}} {a_i \choose  \ell_i}  e^{\mi (a_i -2\ell_i) c_it\log p_i} \right)\\
&= g(n) + \sideset{}{'}\sum_{\ell_1,\ldots,\ell_s} \prod_{ 1\le i\le s}  \frac{1}{2^{a_i}}{a_i \choose  \ell_i}  e^{\mi (a_i -2\ell_i) c_it\log p_i}, 
\end{align*}
where the primed sum is over $(\ell_1,\ldots,\ell_s)\neq (\frac{a_1}{2},\ldots,\frac{a_s}{2})$ such that $1\le \ell_j\le a_j$ for every $1\le j\le s$. Thus, we deduce
\begin{equation}\label{mid-step}
\int_T^{2T} \prod_{ 1\le i\le r}(\cos(t\log p_i))^{a_i} \prod_{ r+1\le i\le s}(\cos(2t\log p_i))^{a_i} dt =Tg(n) + \sideset{}{'}\sum_{\ell_1,\ldots,\ell_s} \prod_{ 1\le i\le s}  \frac{1}{2^{a_i}}{a_i \choose  \ell_i}  \int_T^{2T} (*) dt.
\end{equation} 
The integrand $(*)$ is 
$$
 \exp\left(\mi t( b_1\log p_1 +\cdots + b_r\log p_r +  2b_{r+1}\log p_{r+1} +\cdots + 2b_s\log p_s \right )), 
$$
where $b_i =a_i -2\ell_i$. (Note that, as later, $b_1,\ldots,b_s$ cannot be all zero, and $|b_i|\le a_i$.) We then see  
$$
\left|\int_T^{2T} (*) dt\right|\le \frac{2}{|b_1\log p_1 +\cdots + b_r\log p_r +  2b_{r+1}\log p_{r+1} +\cdots + 2b_s\log p_s | }.
$$
(Note that the denominator is non-zero since $(b_1,\ldots,b_s)\neq(0,\ldots,0)$ and $p_1,\ldots, p_s$ are distinct.)
Grouping together those terms with $b_i > 0$ and $b_i < 0$, respectively, we
can write
$$
|b_1\log p_1 +\cdots + b_r\log p_r +  2b_{r+1}\log p_{r+1} +\cdots + 2b_s\log p_s | = |\log(M/N)|,
$$
where $M\neq N$ are positive integers. Without loss of generality, we may assume $M>N$ and obtain
$$
|\log(M/N)| = \log(M/N)\ge \log\left(\frac{N+1}{N} \right) =\log\left( 1+ \frac{1}{N} \right) \ge  \frac{1}{2N} \ge  \frac{1}{2  (p_1^{a_1}\cdots p_r^{a_r})( p_{r+1}^{2a_{r+1}}\cdots p_{s}^{2a_{s}})}.
$$
Therefore, the primed sum in \eqref{mid-step} is
$$
\ll  \sideset{}{'}\sum_{\substack{ 0\le \ell_i \le a_i\\ 1\le i\le s}} \prod_{ 1\le i\le s} 
 \frac{1}{2^{a_i}}{a_i \choose  \ell_i} \cdot  ( p_1^{a_1}\cdots p_r^{a_r} ) \cdot ( p_{r+1}^{2a_{r+1}}\cdots p_{s}^{2a_{s}}).
$$
Finally, observing that
$$
\sideset{}{'}\sum_{\ell_1,\ldots,\ell_s} \prod_{ 1\le i\le s}  \frac{1}{2^{a_i}}{a_i \choose  \ell_i} \le   \prod_{ 1\le i\le s}\sum_{ 0\le \ell_i \le a_i} \frac{1}{2^{a_i}}{a_i \choose  \ell_i} =   \prod_{ 1\le i\le s} \frac{1}{2^{a_i}} (1+1)^{a_i}=1,
$$
we complete the proof.
\end{proof}

Lastly, we recall the following variant of Mertens' estimate (see, e.g., \cite[p. 57]{GrSo} or \cite[Lemma 2.9]{Mu}).  

\begin{lemma}
Let $a$ and $z\ge 1$ be real numbers. Then one has
\begin{equation}\label{Mertens-variant}
\sum_{p\le z} \frac{\cos(a \log p)}{p}
\begin{cases}
= \log \left( \min \left\{\frac{1}{|a|}, \log z \right\} \right) +O(1)  &  \text{ if $|a|\le \frac{1}{100}$;} \\
\le \log\log(2+ |a|) +O(1)  &  \text{ if $|a|>\frac{1}{100}$,}
\end{cases}
\end{equation}
where the implied constants are absolute.
\end{lemma}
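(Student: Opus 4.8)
The plan is to pass from the prime sum to $\log|\zeta|$ evaluated just to the right of the line $\Re s=1$, and then feed in classical estimates for $\zeta$ there. Since $\cos(a\log p)=\Re(p^{-\mi a})$ and every quantity in the statement depends only on $|a|$, I would first reduce to $a=|a|\ge 0$, and (the sum being empty for $1\le z<2$) assume $z\ge 2$. Put $\sigma:=1+\frac{1}{\log z}$, so that $1<\sigma\le 1+\frac{1}{\log 2}$.

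\emph{Step 1: reduction to $\zeta$.} I would establish
$$\sum_{p\le z}\frac{\cos(a\log p)}{p}=\Re\sum_{p\le z}\frac{1}{p^{1+\mi a}}=\log|\zeta(\sigma+\mi a)|+O(1)$$
from three elementary facts. (i) Moving the line from $1$ to $\sigma$ costs $O(1)$: by $1-e^{-u}\le u$ and Mertens' estimate $\sum_{p\le z}\frac{\log p}{p}=\log z+O(1)$, $\big|\sum_{p\le z}p^{-1-\mi a}-\sum_{p\le z}p^{-\sigma-\mi a}\big|\le\frac{1}{\log z}\sum_{p\le z}\frac{\log p}{p}=O(1)$. (ii) The tail past $z$ is bounded: by Chebyshev's bound $\pi(t)\ll t/\log t$ and partial summation, $\big|\sum_{p>z}p^{-\sigma-\mi a}\big|\le\sum_{p>z}p^{-\sigma}\ll\int_z^\infty\frac{dt}{t^\sigma\log t}=\int_1^\infty\frac{e^{-w}}{w}\,dw=O(1)$ after the change of variables $w=\log t/\log z$. (iii) For $\Re s>1$ the branch $\log\zeta(s)=\sum_p\sum_{k\ge1}\frac{1}{kp^{ks}}$ satisfies $\Re\log\zeta(s)=\log|\zeta(s)|$, and the prime–power terms $\sum_p\sum_{k\ge2}\frac{1}{kp^{ks}}$ are $O(1)$ uniformly for $\Re s\ge1$.

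\emph{Step 2: estimating $|\zeta(\sigma+\mi a)|$.} I would write $\zeta(s)=\frac{1}{s-1}+g(s)$ with $g$ entire and fix an absolute $\varepsilon_0\in(0,1)$ with $|g|\le\frac{1}{2\varepsilon_0}$ on $|s-1|\le1$; note $|s-1|=\sqrt{(\sigma-1)^2+a^2}\asymp\max\{\tfrac{1}{\log z},a\}$. If $a\le\tfrac{1}{100}$ and $|s-1|\le\varepsilon_0$, then $|\zeta(s)|\asymp|s-1|^{-1}$, so $\log|\zeta(\sigma+\mi a)|=-\log\max\{\tfrac{1}{\log z},a\}+O(1)=\log\min\{\tfrac1a,\log z\}+O(1)$. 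If $a\le\tfrac{1}{100}$ and $|s-1|>\varepsilon_0$, then $\min\{\tfrac1a,\log z\}\asymp|s-1|^{-1}\ll1$ while $s$ stays in a fixed compact subset of $\{\Re s\ge1\}$ avoiding the pole, on which $|\zeta|\asymp1$; hence both sides are $O(1)$, and the first case follows. If $a>\tfrac{1}{100}$, the classical bound $|\zeta(\sigma+\mi t)|\ll\log(|t|+2)$ for $\sigma\ge1$ (via the Euler--Maclaurin expansion of $\zeta$, or mere boundedness on a compact set when $a$ is bounded) gives $|\zeta(\sigma+\mi a)|\ll\log(a+2)$, hence $\log|\zeta(\sigma+\mi a)|\le\log\log(2+a)+O(1)$, which is the second case. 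Combining with Step 1 finishes the proof.

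The one point requiring care is keeping every implied constant absolute in Step 2, in particular matching $\log|\zeta(\sigma+\mi a)|$ with $\log\min\{1/a,\log z\}$ uniformly across the crossover regime where neither $\sigma-1$ nor $a$ is small; this is handled by the compactness argument above (choosing $\varepsilon_0$ so that $|s-1|>\varepsilon_0$ with $|a|\le\tfrac1{100}$ forces $\Re s$ to stay bounded away from $1$). It is worth noting that a more hands-on approach, partial summation against the error term in Mertens' theorem, produces spurious contributions of size $\asymp|a|$ (indeed $\asymp|a|\log\log z$ for $z$ enormous); routing through $\zeta$ and the bound $|\zeta(1+\mi t)|\ll\log(|t|+2)$ is exactly what avoids this.
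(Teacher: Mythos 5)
Your proof is correct, and it is worth noting that the paper itself does not prove this lemma at all: it is quoted from the literature (Granville--Soundararajan's book and Munsch's Lemma 2.9), and the argument in those sources is essentially the one you give, namely passing from the truncated prime sum to $\log|\zeta(1+\tfrac{1}{\log z}+\mi a)|$ and then exploiting the pole at $s=1$ (for $|a|$ small) and the classical bound $|\zeta(1+\mi t)|\ll\log(|t|+2)$ (for $|a|$ large). Your Step 1 is carried out cleanly: the shift from exponent $1$ to $\sigma=1+\tfrac{1}{\log z}$ via $1-e^{-u}\le u$ and Mertens, the tail bound via Chebyshev and the substitution $w=\log t/\log z$, and the uniform $O(1)$ from prime powers are all accurate, and Step 2's dichotomy $|s-1|\le\varepsilon_0$ versus the compact region off the pole (where $\zeta$ is continuous and non-vanishing on $\Re s\ge 1$) keeps all constants absolute, as required. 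Two small points of hygiene, neither of which is a gap in substance: (i) as you implicitly noticed by restricting to $z\ge 2$, the asserted equality in the first case degenerates as $z\to 1^{+}$ (the left side is empty while $\log\log z\to-\infty$), so the lemma is really meant for $z$ bounded away from $1$ --- harmless here, since it is only applied with $z=T^{\beta_j}$ large; (ii) the bound $|\zeta(\sigma+\mi t)|\ll\log(|t|+2)$ is not uniform for $\sigma\ge 1$ near the pole, but you only invoke it for $|t|=|a|>\tfrac{1}{100}$, where it is valid (by compactness for bounded $|a|$ and the Euler--Maclaurin estimate for $|a|\ge 2$), and you say as much. Your closing remark that a direct partial-summation attack against the error term in Mertens' theorem produces spurious terms of size $\asymp|a|$ is a fair justification for routing through $\zeta$, which is exactly what the cited references do.
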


\section{Setup and outline of the proof of  Theorem \ref{refined_Harper}}

The goal of this section is to prove Theorem \ref{refined_Harper}. To do so, we follow closely Harper \cite{Ha}. We let $\beta_0 =0$ and 
$$
\beta_i = \frac{20^{i-1}}{(\log\log T)^2}
$$
for every integer $i\ge 1$. Define $\mathcal{I}= \mathcal{I}_{k,T} =1+\max\{i\mid \beta_i\le e^{-1000k}\}.$
For $1\le i\le j \le \mathcal{I}$, we set
\begin{equation*}
G_{i,j}(t)=G_{i,j,T,\alpha_1,\alpha_2}(t)= \sum_{T^{\beta_{i-1}}< p\le T^{\beta_{i}}}\frac{\cos(\frac{1}{2}(\alpha_1-\alpha_2)\log p)}{p^{\frac{1}{2}+\frac{1}{\beta_j \log T}+\mi (t+\frac{1}{2}(\alpha_1+\alpha_2)) } }\frac{\log(T^{\beta_j}/p)}{\log T^{\beta_j} }. 
\end{equation*}
For $1\le i\le \mathcal{I}$, we set 
\begin{equation}
 \label{Fi}
F_i(t) =G_{i,\mathcal{I}}(t)= \sum_{T^{\beta_{i-1}}< p\le T^{\beta_{i}}}\frac{\cos(\frac{1}{2}(\alpha_1-\alpha_2)\log p)}{p^{\frac{1}{2}+\frac{1}{\beta_{\I} \log T}+\mi (t+\frac{1}{2}(\alpha_1+\alpha_2)) } }\frac{\log(T^{\beta_{\I}}/p)}{\log T^{\beta_{\I}} }.
\end{equation}
We define 
 \begin{equation*}
 \begin{split}
  \mathcal{S}(0) = \mathcal{S}_{T,\alpha_1,\alpha_2}(0) & := \Big\{ t\in [T, 2T] \mid  |\Re G_{1,\ell}(t)|> \beta_{1}^{-3/4} \text{ for some } 1\le \ell \le \mathcal{I}\Big\}.
 \end{split}
 \end{equation*}
For $1\le j\le \mathcal{I}-1$, we define 
\begin{align*}
\mathcal{S}(j)
 & = \mathcal{S}_{k,T,\alpha_1,\alpha_2}(j)\\
 & := \Big\{ t\in [T, 2T] \mid  |\Re G_{i,\ell}(t)|\le \beta_i^{-3/4}  \text{ for every }  (i,\ell) \in \mathbb{N}^2 \text{ such that } 1\le i\le j \text{ and } i\le \ell \le  \I,   \\
& \hspace{2.9cm} \text{ and } |\Re G_{j+1,\ell'}(t)|> \beta_{j+1}^{-3/4} \text{ for some } j+1\le \ell' \le \mathcal{I}\Big\}.
\end{align*}
Finally, we set
\begin{equation}\label{def-T}
\mathcal{T}= \mathcal{T}_{k,T,\alpha_1,\alpha_2} := \Big\{ t\in [T, 2T] \mid | \Re  F_i(t) |\le \beta_i^{-3/4} \text{ for every } 1\le i\le \I \Big\}.
\end{equation}
Note that $\beta_{j+1}\le\beta_{ \mathcal{I}}  \le 20 e^{-1000k}$ for any $1\le j \le \mathcal{I}-1$.

Observe 
$$
[T,2T] = \bigcup_{j=0}^{\mathcal{I}-1} \mathcal{S}(j) \cup \mathcal{T}.
$$
Thus, in order to prove Theorem \ref{refined_Harper}, it is sufficient to prove 
\begin{equation}
\begin{split}
  \label{keyinequality}
& \sum_{j=0}^{\mathcal{I}-1} \int_{t\in \mathcal{S}(j) } |\zeta(\oh+\mi(t+\alpha_1))|^k|\zeta(\oh+\mi (t+\alpha_2))|^k dt
+  \int_{t\in\mathcal{T} } |\zeta(\oh+\mi(t+\alpha_1))|^k|\zeta(\oh +\mi (t+\alpha_2))|^k dt\\
&\ll T  (\log T)^{\frac{k^2}{2}} \mathcal{F}(T,\alpha_1, \alpha_2)^{\frac{k^2}{2}}.
\end{split}
\end{equation}
Applying Proposition \ref{main-Sound} with $\lambda=1$, for sufficiently large $T$, $2\le x\le T^2$, and $t\in [T,2T]$, we have
\begin{align}\label{upper-log-zeta}
 \begin{split}
&\log|\zeta(\oh+\mi (t +\alpha_i))| \\
&\le \Re\left(\sum_{ p\le x}\frac{1}{p^{\frac{1}{2}+\frac{1}{ \log x}+\mi (t +\alpha_i)} }\frac{\log(x/p)}{\log x } + \sum_{p\le\min\{\sqrt{x},\log T\} }\frac{1}{2p^{1+2 \mi (t +\alpha_i)}} \right) +\frac{\log T}{\log x} +O(1).
 \end{split}
\end{align}
We further note that the ``main term'' for the upper bound of $\log(  |\zeta(\frac{1}{2}+\mi(t+\alpha_1))|^k|\zeta(\frac{1}{2}+\mi (t+\alpha_2))|^k )$ derived from \eqref{upper-log-zeta} is
\begin{equation}
\begin{split}
 \label{keyidentity}
&k\Re \sum_{ p\le x}\frac{1}{p^{\frac{1}{2}+\frac{1}{ \log x}+\mi (t+\alpha_1) } }\frac{\log(x/p)}{\log x } +k\Re\sum_{ p\le x}\frac{1}{p^{\frac{1}{2}+\frac{1}{ \log x}+\mi (t+\alpha_2) } }\frac{\log(x/p)}{\log x } \\
&=k \sum_{ p\le x}\frac{\cos(-(t+\alpha_1)\log p)}{p^{\frac{1}{2}+\frac{1}{ \log x}} }\frac{\log(x/p)}{\log x } +k \sum_{ p\le x}\frac{\cos(-(t+\alpha_2)\log p)}{p^{\frac{1}{2}+\frac{1}{ \log x} } }\frac{\log(x/p)}{\log x }\\
&=k \sum_{ p\le x}\frac{1}{p^{\frac{1}{2}+\frac{1}{ \log x} } }\frac{\log(x/p)}{\log x } (2\cos(-(t+\oh(\alpha_1+\alpha_2))\log p) \cos(-\oh(\alpha_1-\alpha_2)\log p)  )\\
&=2k \Re \sum_{ p\le x}\frac{\cos(\frac{1}{2}(\alpha_1-\alpha_2)\log p)}{p^{\frac{1}{2}+\frac{1}{ \log x}+\mi(t+\frac{1}{2}(\alpha_1+\alpha_2))  } }\frac{\log(x/p)}{\log x }, 
\end{split}
\end{equation}
where we have made use of the trigonometric identity \eqref{trigidentity}. 
Arguing similarly for the second sum in \eqref{upper-log-zeta}, we arrive at
\begin{equation}
\begin{split}
\label{upper-log-zeta-k}
 & \log(  |\zeta(\oh+\mi(t+\alpha_1))|^k|\zeta(\oh+\mi (t+\alpha_2))|^k ) \\
 & \le
2k \Re\left(\sum_{ p\le x}\frac{\cos(\frac{1}{2}(\alpha_1-\alpha_2)\log p)}{p^{\frac{1}{2}+\frac{1}{ \log x}+\mi (t+\frac{1}{2}(\alpha_1+\alpha_2)) } }\frac{\log(x/p)}{\log x } + \sum_{p\le\min\{\sqrt{x},\log T\} }\frac{\cos((\alpha_1-\alpha_2)\log p)}{2p^{1+ \mi (2t+(\alpha_1+\alpha_2)) }} \right) +\frac{2k\log T}{\log x}\\
& +O(k).
\end{split}
\end{equation}

Theorem \ref{refined_Harper} will be deduced from the following three lemmata.

\begin{lemma}\label{refind_Harper_Lemma1}
In the notation and assumption as above and Theorem \ref{refined_Harper}, for any sufficiently large $T$,  we have
\begin{equation}\label{refind_Harper_est1}
\int_{t\in \mathcal{T}} \exp \left( 2k \Re\sum_{ p\le T^{\beta_{\I}}}\frac{\cos(\frac{1}{2}(\alpha_1-\alpha_2)\log p)}{p^{\frac{1}{2}+\frac{1}{\beta_{\I} \log T}+\mi (t+\frac{1}{2}(\alpha_1+\alpha_2)) } }\frac{\log(T^{\beta_{\I}}/p)}{\log T^{\beta_{\I}} }  \right)dt
\ll_k T (\log T)^{\frac{k^2}{2}} \left(\mathcal{F}(T,\alpha_1,\alpha_2) \right)^{\frac{k^2}{2}},
\end{equation}
where  $\mathcal{F}(T,\alpha_1,\alpha_2) $ is defined in \eqref{def-mathcalF}.
\end{lemma}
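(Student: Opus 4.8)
Since $\beta_0=0$, the prime sum in the exponential is exactly $\sum_{i=1}^{\I}\Re F_i(t)$ with $F_i=G_{i,\I}$ supported on the disjoint ranges $T^{\beta_{i-1}}<p\le T^{\beta_i}$, so we are on the ``typical'' set $\mathcal{T}$ where every $|\Re F_i(t)|\le\beta_i^{-3/4}$. Following Harper, I would truncate each factor $e^{2k\Re F_i(t)}$ and evaluate the resulting Dirichlet polynomial via Lemma~\ref{formula-f}. The elementary input is: for an even integer $n$, $E_n(y):=\sum_{j=0}^{n}y^j/j!>0$ for all real $y$, and if moreover $|y|\le (n+1)/(2e)$ then $e^y\le E_n(y)\bigl(1+2^{-n}\bigr)$ (for $y<0$ one even has $e^y< E_n(y)$; for $y\ge0$ estimate the tail by Stirling). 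Take $\ell_i$ to be the least even integer with $\ell_i\ge 4ek\beta_i^{-3/4}$. Then on $\mathcal{T}$, $|2k\Re F_i(t)|\le 2k\beta_i^{-3/4}\le(\ell_i+1)/(2e)$, so for $t\in\mathcal{T}$
$$
\exp\Bigl(2k\sum_{i=1}^{\I}\Re F_i(t)\Bigr)\le \Bigl(\prod_{i=1}^{\I}\bigl(1+2^{-\ell_i}\bigr)\Bigr)\prod_{i=1}^{\I}\Bigl(\sum_{j=0}^{\ell_i}\frac{(2k\Re F_i(t))^j}{j!}\Bigr).
$$
Since $\ell_i\ge 4ek$, the first product is $1+o(1)$; since each factor of the second product is positive for every real argument, one may drop the restriction $t\in\mathcal{T}$ and integrate over $[T,2T]$. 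I record here the bookkeeping fact used throughout: by the geometric growth of $\beta_i$ and the bound $\beta_{\I}\le 20e^{-1000k}$ forced by the definition of $\I$, one has $\sum_{i=1}^{\I}\ell_i\beta_i\ll_k\beta_{\I}^{1/4}+\beta_{\I}\le\delta$ for a small \emph{absolute} constant $\delta$ (say $\delta<1/10$), once $k\ge1$.

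Next, write $\Re F_i(t)=\sum_{T^{\beta_{i-1}}<p\le T^{\beta_i}}a_p\cos((t+\gamma)\log p)$, where $\gamma:=\oh(\alpha_1+\alpha_2)$ and $a_p:=\cos(\oh(\alpha_1-\alpha_2)\log p)\,p^{-1/2-1/(\beta_{\I}\log T)}\,\log(T^{\beta_{\I}}/p)/\log T^{\beta_{\I}}$. Expanding the powers and the product over $i$ by the product-to-sum formulae, $\prod_{i}\bigl(\sum_{j\le\ell_i}(2k\Re F_i(t))^j/j!\bigr)$ becomes a linear combination of terms $\prod_p(\cos((t+\gamma)\log p))^{c_p}$ with $\sum_{T^{\beta_{i-1}}<p\le T^{\beta_i}}c_p\le\ell_i$ for each $i$; the associated integer $n=\prod_p p^{c_p}$ satisfies $n\le T^{\sum_i\ell_i\beta_i}\le T^{\delta}$, and the number of terms is also $\le T^{\delta}$. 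Applying the ``consequently'' part of Lemma~\ref{formula-f} to each term with this $\gamma$ and using $|\gamma|\le\oh T^{0.6}$ — this is the one place condition \eqref{a1plusa2cond} enters — the pooled contribution of the errors $O(|\gamma|)+O(n)$ is $\ll T^{\delta}(T^{0.6}+T^{\delta})=o(T)$, while the main terms contribute $(T+\gamma)\mathcal{D}$ with $\mathcal{D}\ge0$ the ``diagonal'' sum obtained by replacing each integral by $g(\prod_p p^{c_p})=\prod_p g(p^{c_p})$. As $g$ is multiplicative and the prime ranges are disjoint, $\mathcal{D}$ factors: $\mathcal{D}=\prod_{i=1}^{\I}\mathcal{D}_i$ with $\mathcal{D}_i=\sum_{m=0}^{\lfloor\ell_i/2\rfloor}\frac{(2k)^{2m}}{(2m)!}\mu_{2m}^{(i)}$ and $\mu_{2m}^{(i)}$ the diagonal $2m$-th moment of $\Re F_i$.

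Now $g(p^{2j})=\binom{2j}{j}4^{-j}\le (2j-1)!!\,2^{-j}$, so the diagonal moments of $\Re F_i$ are dominated by Gaussian moments: $\mu_{2m}^{(i)}\le(2m-1)!!\,V_i^{m}$ with $V_i:=\oh\sum_{T^{\beta_{i-1}}<p\le T^{\beta_i}}a_p^2$, by the standard inductive comparison (using $\binom{2m}{2j}(2j-1)!!(2m-2j-1)!!=(2m-1)!!\binom{m}{j}$). Hence $\mathcal{D}_i\le\sum_{m\ge0}\frac{(2k)^{2m}}{(2m)!}(2m-1)!!V_i^{m}=e^{2k^2V_i}$ and $\mathcal{D}\le\exp\bigl(2k^2\sum_i V_i\bigr)$. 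Using $\log(T^{\beta_{\I}}/p)/\log T^{\beta_{\I}}\le1$ and $p^{-2/(\beta_{\I}\log T)}\le1$ together with $\cos^2\theta=\oh(1+\cos2\theta)$,
$$
\sum_{i=1}^{\I}V_i\le\oh\sum_{p\le T^{\beta_{\I}}}\frac{\cos^2(\oh(\alpha_1-\alpha_2)\log p)}{p}=\tfrac14\sum_{p\le T^{\beta_{\I}}}\frac{1}{p}+\tfrac14\sum_{p\le T^{\beta_{\I}}}\frac{\cos((\alpha_1-\alpha_2)\log p)}{p}.
$$
By Mertens the first sum is $\log\log T+O_k(1)$ (as $\log\beta_{\I}=O_k(1)$), and by \eqref{Mertens-variant} — comparing $\min\{|\alpha_1-\alpha_2|^{-1},\beta_{\I}\log T\}$ with $\mathcal{F}(T,\alpha_1,\alpha_2)$ in the range $|\alpha_1-\alpha_2|\le\tfrac1{100}$, and using $\log\log(2+x)=\log(\log(2+x))$ in the complementary range — the second sum is $\le\log\mathcal{F}(T,\alpha_1,\alpha_2)+O_k(1)$. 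Therefore $\mathcal{D}\le\exp\bigl(\tfrac{k^2}{2}\log\log T+\tfrac{k^2}{2}\log\mathcal{F}(T,\alpha_1,\alpha_2)+O_k(1)\bigr)\ll_k(\log T)^{k^2/2}\mathcal{F}(T,\alpha_1,\alpha_2)^{k^2/2}$, and combining $(T+\gamma)\mathcal{D}=T\mathcal{D}(1+o(1))$ with the $o(T)$ error and the $1+o(1)$ truncation factor yields \eqref{refind_Harper_est1}.

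The main obstacles I anticipate are: (i) obtaining the constant exactly as $k^2/2$ — this rests on the subgaussian comparison $g(p^{2j})\le(2j-1)!!\,2^{-j}$ and on the factor $\tfrac14$ produced by $\cos^2$, so any slack there would give the wrong power of $\log T$; and (ii) meeting the two length constraints simultaneously — the truncation forces $\ell_i\gg_k\beta_i^{-3/4}$ while Lemma~\ref{formula-f} needs $\sum_i\ell_i\beta_i$ small, and this is reconciled only because the cutoff $e^{-1000k}$ pins $\beta_{\I}$ to a small $k$-dependent constant, and because the permitted shift size $T^{0.6}$ in \eqref{a1plusa2cond} is exactly calibrated against the $O(|\gamma|)$ error term in Lemma~\ref{formula-f}.
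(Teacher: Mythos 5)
Your proof is correct, and its overall skeleton is the same as the paper's: decompose into the $F_i$, replace each exponential by a Taylor polynomial of length $\asymp k\beta_i^{-3/4}$ (valid on $\mathcal{T}$, positive everywhere, so the restriction to $\mathcal{T}$ can be dropped), expand, integrate term-by-term via Lemma~\ref{formula-f}, bound the diagonal, and finish with Mertens in the form \eqref{Mertens-variant}. Two of your technical choices do differ from the paper's and are worth flagging. First, the paper writes $\exp(2k\Re F_i)=\exp(k\Re F_i)^2$ and truncates the \emph{inner} exponential, so the surrogate is manifestly a square and hence nonnegative; you instead truncate $\exp(2k\Re F_i)$ at an \emph{even} degree $\ell_i$ and invoke the classical fact that $E_{\ell_i}(y)=\sum_{j\le\ell_i}y^j/j!>0$ for all real $y$ when $\ell_i$ is even. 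Both give positivity, and both produce a Dirichlet polynomial short enough for Lemma~\ref{formula-f}; the squared version doubles the bookkeeping (two prime vectors $\tilde p,\tilde q$) but avoids having to justify the positivity of truncated exponentials. Second, for the diagonal the paper rederives, by explicit counting of multiplicities and the formula for $g(p_1^2\cdots p_n^2)$, exactly the sub-Gaussian moment inequality that you invoke abstractly ($\mu_{2m}^{(i)}\le(2m-1)!!\,V_i^m$ with $V_i=\tfrac12\sum a_p^2$); your phrasing is cleaner but it is the same computation. In particular, both routes land on $\mathcal{D}\le\exp\bigl(k^2\sum_{p\le T^{\beta_{\I}}}\cos^2(\tfrac12(\alpha_1-\alpha_2)\log p)/p\bigr)$, which is where the exponent $k^2/2$ comes from once $\cos^2\theta=\tfrac12(1+\cos 2\theta)$ is applied. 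Your accounting of the error terms (the number of terms times $O(|\gamma^+|)+O(n)$, using $\sum_i\ell_i\beta_i\ll ke^{-250k}<\tfrac1{10}$ and $|\gamma^+|\le\tfrac12T^{0.6}$) matches the paper's $O\bigl((|\gamma^+|+T^{0.1})T^{0.1}(\log\log T)^{2k}\bigr)$ and is correctly $o(T)$.
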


\begin{lemma}\label{refind_Harper_Lemma2}
In the notation and assumption as above, let $T$ be sufficiently large. Then we have 
 $$
 \meas(\mathcal{S}(0)) \ll_k Te^{-(\log\log T)^2/10}.
 $$
 In addition, for  $ 1 \le j \le  \I-1$, we have
\begin{align}\label{refind_Harper_est2}
 \begin{split}
&\int_{t\in \mathcal{S}(j)} \exp \left( 2k \Re\sum_{ p\le T^{\beta_{j}}}\frac{\cos(\frac{1}{2}(\alpha_1-\alpha_2)\log p)}{p^{\frac{1}{2}+\frac{1}{\beta_{j} \log T}+\mi (t+\frac{1}{2}(\alpha_1+\alpha_2)) } }\frac{\log(T^{\beta_{j}}/p)}{\log T^{\beta_{j}} }  \right)dt \\
&\ll_k T (\log T)^{\frac{k^2}{2}} \mathcal{F}(T,\alpha_1,\alpha_2)^{\frac{k^2}{2}} \exp \left( \frac{\log(1/\beta_j)}{21 \beta_{j+1}}\right).
 \end{split}
\end{align}
\end{lemma}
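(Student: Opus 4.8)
The plan is to obtain both parts of Lemma~\ref{refind_Harper_Lemma2} by the moment method: one either estimates the measure of the relevant set, or weights it by a high even power of a Dirichlet polynomial $G_{i,\ell}$, expands into products of cosines of $(t+\tfrac12(\alpha_1+\alpha_2))\log p$, evaluates the mean values with Lemma~\ref{refined-H-R}, and controls the resulting prime sums with the Mertens-type estimate~\eqref{Mertens-variant}.

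For $\meas(\mathcal{S}(0))$, fix $\ell$ with $1\le\ell\le\I$ and bound the high even moment $\int_T^{2T}(\Re G_{1,\ell}(t))^{2q}\,dt$ for an integer $q$ equal to a small fixed multiple of $\beta_1^{-1}=(\log\log T)^2$; the multiple is taken small enough that the conductor $T^{O(q\beta_1)}$ of $(\Re G_{1,\ell})^{2q}$ is a small power of $T$, so that the error term in Lemma~\ref{refined-H-R} is $o(T)$. The diagonal main term is $\ll T\,\frac{(2q)!}{2^q q!}\,V^q$ with $V:=\tfrac12\sum_{p\le T^{\beta_1}}\frac{\cos^2(\frac12(\alpha_1-\alpha_2)\log p)}{p}+O(1)\ll\log\log T$ by~\eqref{Mertens-variant}, so Markov's inequality yields
$$
\meas\bigl\{t\in[T,2T]:|\Re G_{1,\ell}(t)|>\beta_1^{-3/4}\bigr\}\le\beta_1^{3q/2}\int_T^{2T}(\Re G_{1,\ell}(t))^{2q}\,dt\ll T(2q\beta_1)^{q}+o(T).
$$
Choosing the multiple small enough that $2q\beta_1\le\tfrac13$ makes this $\ll Te^{-(\log\log T)^2/9}$, and summing over the $\ll\log\log\log T$ admissible $\ell$ gives $\meas(\mathcal{S}(0))\ll_k Te^{-(\log\log T)^2/10}$.

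For $\int_{\mathcal{S}(j)}$ with $1\le j\le\I-1$, note that the exponent in~\eqref{refind_Harper_est2} is $2k\sum_{i=1}^j\Re G_{i,j}(t)$, and on $\mathcal{S}(j)$ each block satisfies $|\Re G_{i,j}(t)|\le\beta_i^{-3/4}$ (take $\ell=j$). Writing $G_{i,j}(t)=\sum_{T^{\beta_{i-1}}<p\le T^{\beta_i}}b_p\,p^{-\mi t}$, following Harper I would write $\exp(2k\sum_{i\le j}\Re G_{i,j})=\prod_{i\le j}|\exp(kG_{i,j})|^2$ and replace each $\exp(kG_{i,j})$ by its Dirichlet-polynomial truncation $M_i(t)$ to integers with at most $N_i:=\lceil Ck\beta_i^{-3/4}\rceil$ prime factors counted with multiplicity. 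This is effective because $\sum_{i\le j}N_i\beta_i\ll k\sum_{i\ge1}\beta_i^{1/4}\to0$, so $\prod_{i\le j}|M_i|^2$ has conductor $T^{o(1)}$, and because $N_i$ far exceeds $2k^2\sigma_i^2$, where $\sigma_i^2:=\tfrac12\sum_{T^{\beta_{i-1}}<p\le T^{\beta_i}}|b_p|^2$ obeys $\sigma_1^2\ll\log\log T$, $\sigma_i^2\ll1$ for $i\ge2$; a standard iterated Cauchy--Schwarz argument, using also $|\exp(kG_{i,j})|\le e^{k\beta_i^{-3/4}}$ on $\mathcal{S}(j)$, then reduces $\int_{\mathcal{S}(j)}\prod_{i\le j}|\exp(kG_{i,j})|^2\,dt$ to $\int_{[T,2T]}\prod_{i\le j}|M_i(t)|^2\,dt+o(T)$. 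I would also insert the weight $\sum_{\ell'=j+1}^{\I}\bigl(\beta_{j+1}^{3/4}|\Re G_{j+1,\ell'}(t)|\bigr)^{2q_{j+1}}\ge\mathbf{1}_{\mathcal{S}(j)}(t)$ with $q_{j+1}$ a small fixed multiple of $\beta_{j+1}^{-1}$ — a Dirichlet polynomial on the disjoint block $T^{\beta_j}<p\le T^{\beta_{j+1}}$ of conductor $T^{O(1)}$. By Lemma~\ref{refined-H-R} the mean value over $[T,2T]$ factors over the disjoint prime blocks up to $o(T)$: blocks $1,\dots,j$ contribute $\prod_{i\le j}\tfrac1T\int|M_i|^2\,dt\le\prod_{p\le T^{\beta_j}}I_0(2k|b_p|)\le e^{k^2\sum_{p\le T^{\beta_j}}|b_p|^2}=e^{2k^2\sum_{i\le j}\sigma_i^2}$ (using $I_0(2x)\le e^{x^2}$), and since $|b_p|^2\le p^{-1}\cos^2(\frac12(\alpha_1-\alpha_2)\log p)$, the identity $\cos^2=\tfrac12(1+\cos)$ and~\eqref{Mertens-variant} give $\sum_{i\le j}\sigma_i^2\le\tfrac14\log(\beta_j\log T)+\tfrac14\log\mathcal{F}(T,\alpha_1,\alpha_2)+O(1)$, so this is $\ll_k(\log T)^{k^2/2}\mathcal{F}(T,\alpha_1,\alpha_2)^{k^2/2}\beta_j^{k^2/2}$; the $(j+1)$-st weight contributes $\ll\I(Cq_{j+1}\beta_{j+1}^{3/2})^{q_{j+1}}$, which for $q_{j+1}\asymp\beta_{j+1}^{-1}$ is $\ll\exp(-c\beta_{j+1}^{-1}\log(1/\beta_{j+1}))$. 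As $\beta_j^{k^2/2}=\exp(-\tfrac{k^2}{2}\log(1/\beta_j))\le\exp\bigl(\tfrac{\log(1/\beta_j)}{21\beta_{j+1}}\bigr)$, the outcome lies comfortably within~\eqref{refind_Harper_est2}.

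The main obstacle is the rigorous treatment of the truncation $\exp(kG_{i,j})=M_i+R_i$: one must show the tails $R_i$ contribute only $o(T)$ to $\int_{[T,2T]}\prod_{i\le j}|M_i+R_i|^2\,dt$, which requires controlling the mean square of each $R_i$ and — for the Cauchy--Schwarz step — higher moments of the $M_i$ and of a secondary truncation of the $R_i$, via Lemma~\ref{refined-H-R} and the coefficient estimates underlying Lemma~\ref{formula-f}; the short-block structure and the inequalities $N_1\asymp k(\log\log T)^{3/2}\gg\log\log T\asymp\sigma_1^2$, $N_i\gg1\ge\sigma_i^2$ ($i\ge2$) are exactly what make the tails negligible. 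A further point is purely bookkeeping: the combined conductor of $\prod_{i\le j}|M_i|^2$ and the $(j+1)$-block weight must stay below $T^{1/2}$ (ensured by $\sum_i\beta_i^{1/4}\to0$ and by $q_{j+1}\beta_{j+1}$ bounded) so that the errors in Lemma~\ref{refined-H-R} are genuinely $o(T)$, and the factorisation over the $\asymp\log\log\log T$ blocks must not accrue an unacceptable constant — here the Bessel inequality $I_0(2x)\le e^{x^2}$ is convenient, as it gives $\tfrac1T\int|M_i|^2\,dt\le e^{2k^2\sigma_i^2}$ with no loss, all $O_k(1)$ factors entering only through the $O(1)$ terms in~\eqref{Mertens-variant}.
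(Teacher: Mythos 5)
Your outline reproduces the paper's proof in all essentials: Markov/high-moment bound for $\meas(\mathcal{S}(0))$, domination of $\mathds{1}_{\mathcal{S}(j)}$ by $\sum_{\ell'}(\beta_{j+1}^{3/4}|\Re G_{j+1,\ell'}|)^{2q_{j+1}}$ with $q_{j+1}\asymp\beta_{j+1}^{-1}$, Taylor truncation of $\exp(k\Re G_{i,j})$ to length $\asymp k\beta_i^{-3/4}$, evaluation of the mean via Lemma~\ref{formula-f} with the conductor held below a small power of $T$, and the Mertens-type estimate~\eqref{Mertens-variant} applied to $\sum_{p\le T^{\beta_j}}\cos^2(\tfrac12(\alpha_1-\alpha_2)\log p)/p$. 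The only cosmetic differences are that you phrase the diagonal computation via $\prod_p I_0(2k|b_p|)\le e^{k^2\sum|b_p|^2}$ instead of the paper's explicit $g$-function combinatorics, and you cite Lemma~\ref{refined-H-R} where the simpler Lemma~\ref{formula-f} already suffices here (no $\cos(2t\log p)$ terms occur in this lemma); neither affects correctness, and your derivation in fact produces the stronger decaying factor $\exp(-c\,\beta_{j+1}^{-1}\log(1/\beta_{j+1}))$ that is actually used when Theorem~\ref{refined_Harper} is assembled.
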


\begin{lemma}\label{refind_Harper_Lemma3}
In the notation and assumption as above and Theorem \ref{refined_Harper}, we have 
\begin{equation}
\begin{split}
\label{replace}
 & \int_{t\in \mathcal{T}} \exp \left( 2k \Re \left( 
 \sum_{ p\le T^{\beta_{\I}}}\frac{\cos(\frac{1}{2}(\alpha_1-\alpha_2)\log p)}{p^{\frac{1}{2}+\frac{1}{\beta_{\I} \log T}+\mi (t+\frac{1}{2}(\alpha_1+\alpha_2)) } }\frac{\log(T^{\beta_{\I}}/p)}{\log T^{\beta_{\I}} } + \sum_{p\le \log T }\frac{\cos((\alpha_1-\alpha_2)\log p)}{2p^{1+ \mi (t+(\alpha_1-\alpha_2))}}
\right)  \right) dt \\
& \ll_k T (\log T)^{\frac{k^2}{2}} \left(\mathcal{F}(T,\alpha_1,\alpha_2) \right)^{\frac{k^2}{2}}.
\end{split}
\end{equation}
and for $1\le j\le \mathcal{I}-1$
\begin{equation}
\begin{split}
\label{replace2}
 & \int_{t\in \mathcal{S}(j)} \exp \left( 2k \Re \left(
\sum_{ p\le T^{\beta_{j}}}\frac{\cos(\frac{1}{2}(\alpha_1-\alpha_2)\log p)}{p^{\frac{1}{2}+\frac{1}{\beta_{j} \log T}+\mi (t+\frac{1}{2}(\alpha_1+\alpha_2)) } }\frac{\log(T^{\beta_{j}}/p)}{\log T^{\beta_{j}} } + \sum_{p\le \log T }\frac{\cos((\alpha_1-\alpha_2)\log p)}{2p^{1+ \mi (t+(\alpha_1-\alpha_2))}}
\right)  \right) dt \\
& \ll_k T (\log T)^{\frac{k^2}{2}} \mathcal{F}(T,\alpha_1,\alpha_2)^{\frac{k^2}{2}} \exp \left( \frac{\log(1/\beta_j)}{21 \beta_{j+1}}\right).
\end{split}
\end{equation}
\end{lemma}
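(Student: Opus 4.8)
\textbf{Proof strategy for Lemma~\ref{refind_Harper_Lemma3}.} Write $\mathcal{D}(t)$ for the sum over primes $p\le\log T$ that occurs inside the exponential in \eqref{replace} (the same sum occurs in \eqref{replace2}); its coefficients have absolute value $\le 1/(2p)$, so the sum of the squares of its coefficients is $\ll\sum_p p^{-2}\ll1$ and it is supported on the very short range $p\le\log T$. The integrand in \eqref{replace} equals that of \eqref{refind_Harper_est1} times $\exp(2k\Re\mathcal{D}(t))$, and the integrand in \eqref{replace2} equals that of \eqref{refind_Harper_est2} times $\exp(2k\Re\mathcal{D}(t))$. Hence it suffices to show that inserting the factor $\exp(2k\Re\mathcal{D}(t))$ changes the bounds of Lemmas~\ref{refind_Harper_Lemma1} and \ref{refind_Harper_Lemma2} by at most an $O_k(1)$ multiple. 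This cannot be obtained merely by splitting off $\exp(2k\Re\mathcal{D})$ with H\"older's inequality: H\"older would require estimating $\int_T^{2T}\exp(2kb\,\Re\mathcal{D}(t))\,dt$ with a conjugate exponent $b\to\infty$, and because $\Re\mathcal{D}(t)$ can be as large as $\tfrac12\sum_{p\le\log T}p^{-1}\sim\tfrac12\log\log\log T$, this integral can be as large as $T\exp(c(kb)^2)$ for an absolute constant $c>0$, whose $1/b$-th power does not return to a bounded factor. Instead we simply rerun the proofs of Lemmas~\ref{refind_Harper_Lemma1} and \ref{refind_Harper_Lemma2}, carrying the extra factor $\exp(2k\Re\mathcal{D}(t))$ along; \eqref{replace} and \eqref{replace2} are handled identically.

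The reason $\exp(2k\Re\mathcal{D}(t))$ is harmless is that, having coefficients $\ll 1/p$ on the range $p\le\log T$, it contributes only \emph{convergent} local sums. Factor $\exp(2k\Re\mathcal{D}(t))=\prod_{p\le\log T}\exp\!\big(a_p\cos(\theta_p(t))\big)$, where $\theta_p(t)$ is a linear phase and $|a_p|\le k/p$, and expand each factor in its Taylor series truncated at order $\ll\log T/(\log\log T)^2$. Since all the primes involved are $\le\log T$, this lengthens the Dirichlet polynomials occurring in the proofs of Lemmas~\ref{refind_Harper_Lemma1} and \ref{refind_Harper_Lemma2} by a factor $\le T^{1/\log\log T}=T^{o(1)}$ only, while the discarded tail is $\ll\exp(-c\log T/\log\log T)$, negligible against $T(\log T)^{k^2/2}\mathcal{F}^{k^2/2}$. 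One then integrates term by term over $[T,2T]$ using the mean-value estimates of Lemmas~\ref{formula-f} and \ref{refined-H-R} (the latter being designed precisely for products of $\cos(t\log p_i)$'s and $\cos(2t\log p_i)$'s, which is what $F$ and $\mathcal{D}$ produce together), and evaluates the diagonal term via the Mertens-type estimate \eqref{Mertens-variant}. In the diagonal term the presence of $\mathcal{D}$ introduces an extra factor $\prod_{p\le\log T}(1+O(k^2/p^{3/2}))$: for each $p$ the ``pure $\mathcal{D}$'' part is $\sum_{m\ge0}a_p^{2m}/(2^{2m}(m!)^2)=1+O(k^2/p^2)$, and its interaction with the $p$-th local factor of $\exp(2k\Re F)$ (whose coefficient at $p$ has size $\ll p^{-1/2}$) contributes a further $1+O(k^2/p^{3/2})$. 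Since $\sum_p p^{-3/2}<\infty$, this whole product is $O_k(1)$, and \eqref{replace}, \eqref{replace2} follow from \eqref{refind_Harper_est1}, \eqref{refind_Harper_est2}.

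I expect the real work to be bookkeeping rather than conceptual. One must check that inserting the expansion of $\exp(2k\Re\mathcal{D})$ does not spoil the error terms of Lemmas~\ref{formula-f} and \ref{refined-H-R}: the truncation orders for $\exp(2k\Re F)$ (and for the shorter exponential appearing on $\mathcal{S}(j)$) and for $\exp(2k\Re\mathcal{D})$ must be chosen so that every Dirichlet polynomial occurring has length $\le T^{1-\delta}$ for a fixed $\delta>0$, uniformly for $t\in\mathcal{T}$ and for $t\in\mathcal{S}(j)$, and the aggregate of the resulting error terms must still be $o\!\left(T(\log T)^{k^2/2}\mathcal{F}^{k^2/2}\right)$; one also checks that the hypothesis \eqref{a1plusa2cond} continues to suffice, as it enters only through the $O(|\gamma|)$ errors of Lemmas~\ref{formula-f} and \ref{refined-H-R} with $\gamma=\tfrac12(\alpha_1+\alpha_2)$. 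Each of these points is handled exactly as in the proofs of Lemmas~\ref{refind_Harper_Lemma1} and \ref{refind_Harper_Lemma2}, and every term genuinely new to \eqref{replace}--\eqref{replace2}, i.e.\ every term involving $\mathcal{D}$, is smaller, being controlled by the convergent sums $\sum_p p^{-3/2}$ and $\sum_p p^{-2}$; the honest growth $(\log T)^{k^2/2}\mathcal{F}^{k^2/2}$ comes solely from the divergence of $\sum_{p\le T^{\beta_{\I}}}p^{-1}$ and $\sum_{p\le T^{\beta_{\I}}}\cos((\alpha_1-\alpha_2)\log p)/p$ produced by $F$ alone.
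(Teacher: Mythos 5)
Your approach is genuinely different from the paper's, and it contains a real gap. The paper does not Taylor-expand the factor $\exp(2k\Re\mathcal{D}(t))$ at all. Instead, it performs a stopping-time decomposition of $[T,2T]$ according to which block $P_m(t)=\sum_{2^m<p\le2^{m+1}}\cos((\alpha_1-\alpha_2)\log p)/(2p^{1+\mi(2t+(\alpha_1+\alpha_2))})$ first violates $|\Re P_m(t)|\le 2^{-m/10}$: on the complement of all $\mathcal{P}(m)$ one has $\Re\mathcal{D}(t)=O(1)$ and Lemma~\ref{refind_Harper_Lemma1} applies verbatim; on $\mathcal{P}(m)$ with $m$ small one uses the crude bound $|\Re\mathcal{D}(t)|\ll 2^{m/2}$, \emph{removes} the primes $p\le 2^{m+1}$ from the $F$-sum (this is the crucial move), and multiplies by $(2^{m/10}\Re P_m(t))^{2[2^{3m/4}]}$, so that the only primes paired under Lemma~\ref{refined-H-R} are those in $(2^{m+1},T^{\beta_{\I}}]$ (with phase $\cos(t\log p)$) and those in $(2^m,2^{m+1}]$ (with phase $\cos(2t\log p)$) --- these two sets are disjoint by construction; and on $\mathcal{P}(m)$ with $m$ large a measure bound plus Cauchy--Schwarz finishes.

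This disjointness is exactly where your plan breaks down. You propose to carry the Taylor polynomial of $\exp(2k\Re\mathcal{D}(t))$ through the proof of Lemma~\ref{refind_Harper_Lemma1}, ``using the mean-value estimates of Lemmas~\ref{formula-f} and \ref{refined-H-R}.'' But every prime $p\le\log T$ lies in $(T^{\beta_0},T^{\beta_1}]$, hence already appears in $F_1$ with phase $\cos((t+\gamma^+)\log p)$ \emph{and} in $\mathcal{D}$ with phase $\cos((2t+2\gamma^+)\log p)$. Both Lemma~\ref{formula-f} and Lemma~\ref{refined-H-R} are stated, and proved, only for \emph{distinct} primes $p_1,\dots,p_s$: they do not cover products $\cos(t\log p)^a\cos(2t\log p)^b$ for the same $p$ with $a,b\ge 1$, which is the generic term your expansion produces for $p\le\log T$. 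To make your approach work you would have to prove a further variant of Radziwi{\l}{\l}'s lemma for overlapping supports, redo the diagonal combinatorics (the mean of $\cos(\theta)^a\cos(2\theta)^b$ is nonzero for all even $a$ and all $b\ge 0$, not just for both even, so the $g$-function must be replaced), and verify that the error terms remain $\ll T^{0.1+o(1)}$. Incidentally your stated gain $1+O(k^2/p^{3/2})$ from the cross-terms is not quite right: $\cos(\theta_p)\cos(2\theta_p)$ averages to zero, so the first nonvanishing cross-contribution comes from $a=2,b=1$ and is of size $O(k^3/p^2)$, which is still summable; and the Kirila-style positive-polynomial truncation of $\exp(2k\Re\mathcal{D}(t))$ must be justified by the \emph{a priori} bound $|\Re\mathcal{D}(t)|\le\tfrac12\sum_{p\le\log T}p^{-1}\ll\log\log\log T$ (valid on all of $[T,2T]$), not by any constraint coming from $\mathcal{T}$, a point your proposal leaves implicit. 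None of these difficulties is insurmountable, but none is addressed, and the paper's stopping-time decomposition exists precisely to avoid them.
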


Now we are ready to prove Theorem \ref{refined_Harper}.

\begin{proof}[Proof of Theorem \ref{refined_Harper}]
We must show inequality  \eqref{keyinequality}  holds.  It suffices to show that each of the two terms on the left hand side of \eqref{keyinequality}
is  $\ll T  (\log T)^{\frac{k^2}{2}} \mathcal{F}(T,\alpha_1, \alpha_2)^{\frac{k^2}{2}}$. 
By \eqref{upper-log-zeta-k}, we know that $\log( |\zeta(\frac{1}{2}+\mi (t+\alpha_1))|^k|\zeta(\frac{1}{2}+\mi(t+\alpha_2))|^k )$ is at most
$$
2k \Re\left( \sum_{ p\le T^{\beta_{\I}}}\frac{\cos(\frac{1}{2}(\alpha_1-\alpha_2)\log p)}{p^{\frac{1}{2}+\frac{1}{\beta_{\I} \log T}+\mi (t+\frac{1}{2}(\alpha_1+\alpha_2)) } }\frac{\log(T^{\beta_{\I}}/p)}{\log T^{\beta_{\I}} }  + \sum_{p\le \log T }\frac{\cos((\alpha_1-\alpha_2)\log p)}{2p^{1+ \mi( 2t+(\alpha_1+\alpha_2))}} \right) + \frac{2k}{\beta_{\mathcal{I}}} +O(k).
$$
Hence, \eqref{replace} of  Lemma \ref{refind_Harper_Lemma3}  implies
\begin{equation}
\begin{split}
  \label{inequalityA}
& \int_{t\in\mathcal{T} } |\zeta(\oh+\mi (t+\alpha_1))|^k|\zeta(\oh+\mi(t+\alpha_2))|^k dt \\
&\ll  \int_{t\in \mathcal{T}} \exp\left( 2k  \Re\left(\sum_{ p\le T^{\beta_{\I}}}\frac{\cos(\frac{1}{2}(\alpha_1-\alpha_2)\log p)}{p^{\frac{1}{2}+\frac{1}{\beta_{\I} \log T}+\mi (t+\frac{1}{2}(\alpha_1+\alpha_2)) } }\frac{\log(T^{\beta_{\I}}/p)}{\log T^{\beta_{\I}} }  + \sum_{p\le \log T }\frac{\cos((\alpha_1-\alpha_2)\log p)}{2p^{1+ \mi(2 t+(\alpha_1+\alpha_2))}} \right)  \right) dt\\
&\times e^{2k/\beta_{\I} +O(k)} \\
&\ll_k   T  (\log T)^{\frac{k^2}{2}} \mathcal{F}(T,\alpha_1,\alpha_2)^{\frac{k^2}{2}} .
\end{split}
\end{equation}
Similarly, for $1\le j \le \mathcal{I}-1$, we can bound  $\log( |\zeta(\frac{1}{2}+\mi (t+\alpha_1))|^k|\zeta(\frac{1}{2}+\mi(t+\alpha_2))|^k )$ above by
$$
2k \Re\left(\sum_{ p\le T^{\beta_{j}}}\frac{\cos(\frac{1}{2}(\alpha_1-\alpha_2)\log p)}{p^{\frac{1}{2}+\frac{1}{\beta_{j} \log T}+\mi (t+\frac{1}{2}(\alpha_1+\alpha_2)) } }\frac{\log(T^{\beta_{j}}/p)}{\log T^{\beta_{j}} } + \sum_{p\le \log T }\frac{\cos((\alpha_1-\alpha_2)\log p)}{2p^{1+\mi(2 t+(\alpha_1+\alpha_2))}} \right) + \frac{2k}{\beta_{j}} +O(k).
$$
It then follows from Lemma \ref{refind_Harper_Lemma3} that
$$
 \int_{t\in\mathcal{S}(j) }|\zeta(\oh+\mi (t+\alpha_1))|^k|\zeta(\oh+\mi(t+\alpha_2))|^k  dt
 \ll e^{2k/\beta_j}\cdot e^{- (21 \beta_{j+1})^{-1} \log(1/\beta_{j+1})}  T  (\log T)^{\frac{k^2}{2}}\mathcal{F}(T,\alpha_1,\alpha_2)^{\frac{k^2}{2}}.
$$
Since $20\beta_j= \beta_{j+1}\le \beta_{\mathcal{I}}\le 20 e^{-1000k}$, $\log(1/\beta_{j+1})\ge 900k$, and so
$$
e^{2k/\beta_j}\cdot e^{-(21\beta_{j+1})^{-1} \log(1/\beta_{j+1})} = e^{2k/\beta_j -  (\log(1/\beta_{j+1}))/420 \beta_j} \le e^{-0.1k/\beta_j}.
$$
Observe that $\I \le \frac{2}{(\log 20)}\log\log\log T$ and
\begin{equation}
  \label{bd1}
\sum_{j=1}^{\I-1} e^{-0.1k/\beta_j} = \sum_{j=1}^{\I-1}  e^{-2k(\log\log T)^2/ 20^j} \le
 e^{-2k(\log\log T)^2}+
 \int_{1}^{\frac{2}{(\log 20)}\log\log\log T} e^{-2k(\log\log T)^2/ 20^x} dx.
\end{equation}
By the change of variables $20^{-x} =u$ (with $dx = \frac{-1}{\log 20} \frac{du}{u}  $), we see that the integral above equals
\begin{equation}
  \label{bd2}
-\frac{1}{\log 20}\int_{\frac{1}{20}}^{ \frac{1}{(\log\log T)^2}}    e^{-2k(\log\log T)^2 u}   \frac{du}{u}  \ll (\log\log T)^2  \int_{ \frac{1}{(\log\log T)^2}}^{\frac{1}{20}}  e^{-2k(\log\log T)^2 u}  du  \ll \frac{e^{-2k}}{2k}.
\end{equation}
Combining \eqref{bd1} and \eqref{bd2}, we arrive at
\begin{equation}
  \label{inequalityB}
\sum_{j=1}^{\mathcal{I}-1} \int_{t\in\mathcal{S}(j) } |\zeta(\oh+\mi (t+\alpha_1))|^k|\zeta(\oh+\mi(t+\alpha_2))|^k  dt \ll T  (\log T)^{\frac{k^2}{2}}\mathcal{F}(T,\alpha_1,\alpha_2)^{\frac{k^2}{2}}.
\end{equation}
Finally, for $j=0$, by the Cauchy-Schwarz inequality and Lemma \ref{refind_Harper_Lemma2}, we have
\begin{align}
\label{s0}
\begin{split}
&\int_{t\in\mathcal{S}(0) } |\zeta(\oh+\mi (t+\alpha_1))|^k|\zeta(\oh+\mi(t+\alpha_2))|^k  dt \\
&\le   \meas(\mathcal{S}(0))^{\frac{1}{2}} \left( \int_T^{2T} |\zeta(\oh+\mi (t+\alpha_1))|^{2k}|\zeta(\oh+\mi (t+\alpha_2))|^{2k} dt  \right)^{\frac{1}{2}}.
\end{split}
\end{align}
Using the Cauchy-Schwarz inequality again and the upper bound \eqref{soundbd} with $\e =1$,  we see
\begin{equation*}
\begin{split}
&\int_T^{2T} |\zeta(\oh+\mi (t+\alpha_1))|^{2k}|\zeta(\oh+\mi (t+\alpha_2))|^{2k} dt \\
&\ll  \left( \int_T^{2T} |\zeta(\oh+\mi (t+\alpha_1))|^{4 k} dt  \right)^{\frac{1}{2}}  
     \left(  \int_T^{2T} |\zeta(\oh+\mi (t+\alpha_2))|^{4k} dt  \right)^{\frac{1}{2}}\\
 & \ll T (\log T)^{4k^2 + 1}.
\end{split}
\end{equation*}
Hence, combined with \eqref{s0}, we derive
\begin{equation}
\begin{split}
 \label{inequalityC}
\int_{t\in\mathcal{S}(0) } |\zeta(\oh+\mi (t+\alpha_1))|^k|\zeta(\oh+\mi(t+\alpha_2))|^k  dt 
\ll \sqrt{T}e^{-(\log\log T)^2/20}\cdot \sqrt{T} (\log T)^{2k^2+\frac{1}{2}}\ll T.
\end{split}
\end{equation}
Therefore, by combining inequalities \eqref{keyinequality}, \eqref{inequalityA}, \eqref{inequalityB}, and \eqref{inequalityC} 
the proof of Theorem \ref{refined_Harper}  is complete. 
\end{proof}

\section{Proof of Lemma \ref{refind_Harper_Lemma1}}
Observe that 
\begin{equation}
   \label{decomp}
      \sum_{ p\le T^{\beta_{\I}}}\frac{\cos(\frac{1}{2}(\alpha_1-\alpha_2)\log p)}{p^{\frac{1}{2}+\frac{1}{\beta_{\I} \log T}+\mi (t+\frac{1}{2}(\alpha_1+\alpha_2)) } }\frac{\log(T^{\beta_{\I}}/p)}{\log T^{\beta_{\I}} }
      = \sum_{ i=1}^{\I} F_i(t)
\end{equation}
where $F_i$ is defined by \eqref{Fi}.
By \eqref{decomp},  we have 
\begin{align}\label{Ha-lemma1-start}
  \begin{split}
&\int_{t\in \mathcal{T}} \exp \left( 2k \Re\sum_{ p\le T^{\beta_{\I}}}\frac{\cos(\frac{1}{2}(\alpha_1-\alpha_2)\log p)}{p^{\frac{1}{2}+\frac{1}{\beta_{\I} \log T}+\mi (t+\frac{1}{2}(\alpha_1+\alpha_2)) } }\frac{\log(T^{\beta_{\I}}/p)}{\log T^{\beta_{\I}} }  \right)dt
= \int_{t\in \mathcal{T}} \prod_{1\le i \le \I} \exp( k\Re F_i(t) )^2 dt 
 \end{split}
\end{align}
where we recall $\mathcal{T}$ 
  is defined  in \eqref{def-T}. Next, note that 
\begin{equation}
 \label{Ha-lemma1-startB}
   \int_{t\in \mathcal{T}} \prod_{1\le i \le \I} \exp( k\Re F_i(t) )^2 dt \ll \mathscr{I}
 :=  \int_T^{2T}  \prod_{1\le i \le \I}\left(\sum_{0\le j \le 100k \beta_i^{-3/4}} \frac{(k \Re F_i(t)  )^j}{j!} \right)^2 dt.
\end{equation}
This inequality establishes  that each factor $\exp(k \Re F_i(t))$ may be replaced by its Taylor polynomial
of length $100k \beta_{i}^{-\frac{3}{4}}$.  Full details of this argument can be found in \cite[Lemma 5.2, pp. 484-486]{Ki}.
In order to simplify the presentation, we set 
\begin{equation*}
\gamma^+= \frac{1}{2}(\alpha_1+\alpha_2) \text{ and } \gamma^-= \frac{1}{2}(\alpha_1-\alpha_2).
\end{equation*} 
Expanding out all of the $j$-th powers and opening the square, we see  that 
\begin{align} \label{Ha-lemma1-mid1}
 \begin{split}
& \mathscr{I}\\ & = \sum_{\tilde{j}, \tilde{\ell}}\left(\prod_{1\le i \le \I} \frac{k^{j_i}}{j_i !}\frac{k^{\ell_i}}{\ell_i !}  \right) \sum_{\tilde{p}, \tilde{q}} C(\tilde{p}, \tilde{q}) \int_T^{2T}\prod_{1\le i \le \I} \prod_{\substack{1\le r \le j_i  \\ 1\le s \le \ell_i}} \cos((t+\gamma^+)\log p(i,r) )\cos((t+\gamma^+)\log q(i,s) ) dt\\
& \times \prod_{1\le i \le \I} \prod_{\substack{1\le r \le j_i  \\ 1\le s \le \ell_i}} \cos(\gamma^-\log p(i,r) )\cos(\gamma^-\log q(i,s) ), 
 \end{split}
\end{align}
where the first sum is over all 
\[
   \tilde{j}=(j_1,\ldots,j_{\mathcal{I}}), \, \tilde{\ell}=(\ell_1,\ldots,\ell_{\mathcal{I}}) \text{ where } 0\le j_i,\ell_i \le 100k \beta_i^{-3/4},
\]
the second sum is over
\begin{equation*}
\begin{split}
   \tilde{p} & =( p(1,1),\ldots, p(1,j_1), p(2,1),\ldots,p(2,j_2),\ldots, p(\mathcal{I},j_{\mathcal{I}} ))
   \text{ and } \\
    \tilde{q} & =( q(1,1),\ldots, q(1,\ell_1), q(2,1),\ldots,q(2,{\ell}_2),\ldots, q(\mathcal{I},{\ell}_{\mathcal{I}} ))
\end{split}
\end{equation*}
whose components are primes which satisfy
$$
T^{\beta_{i-1}}< p(i,1),\ldots, p(i,j_i),q(i,1),\ldots, q(i,\ell_i)\le T^{\beta_{i}}
$$
for any $1\le i\le \mathcal{I}$, and
$$
C(\tilde{p}, \tilde{q})= \prod_{1\le i \le \I}\prod_{\substack{1\le r \le j_i  \\ 1\le s \le \ell_i}}\frac{1}{p(i,r)^{\frac{1}{2}+\frac{1}{\beta_{\I} \log T} } }\frac{\log(T^{\beta_{\I}}/p(i,r))}{\log T^{\beta_{\I}} } 
\frac{1}{q(i,s)^{\frac{1}{2}+\frac{1}{\beta_{\I} \log T} } }\frac{\log(T^{\beta_{\I}}/q(i,s))}{\log T^{\beta_{\I}} } .
$$
Following the argument in  \cite[p. 10]{Ha} (see the third displayed equation), we have 
\begin{equation}
  \label{nbound}
   \prod_{1\le i \le \I}\prod_{\substack{1\le r \le j_i  \\ 1\le s \le \ell_i}} p(i,r)q(i,s) \le T^{0.1}. 
\end{equation} 
By Lemma \ref{formula-f} and \eqref{nbound}, it follows that
\begin{equation}
\begin{split}
  \label{lemmaapp}
& \int_T^{2T}\prod_{1\le i \le \I} \prod_{\substack{1\le r \le j_i  \\ 1\le s \le \ell_i}} \cos((t+\gamma^+)\log p(i,r) )\cos((t+\gamma^+)\log q(i,s) ) dt\\
 & = (T+\gamma^+) g\left( \prod_{1\le i \le \I} \prod_{\substack{1\le r \le j_i  \\ 1\le s \le \ell_i}}  p(i,r) q(i,s)  \right) + O(|\gamma^+|) +O(T^{0.1}).
\end{split}
\end{equation}
Let 
\begin{equation*}
   D(\tilde{p}, \tilde{q})=  \prod_{1\le i \le \I}  \prod_{\substack{1\le r \le j_i  \\ 1\le s \le \ell_i}} \frac{1}{\sqrt{p(i,r)}}\frac{1}{\sqrt{ q(i,s)  }} 
\end{equation*}
and observe that
\begin{equation}
 \label{CDinequality}
 C(\tilde{p}, \tilde{q})\le D(\tilde{p}, \tilde{q}).  
\end{equation} 
By \eqref{lemmaapp}, 
\eqref{CDinequality},  and the bound $|\cos x|\le 1$ for real $x$, it follows that \eqref{Ha-lemma1-mid1} equals
\begin{align} \label{Ha-lemma1-mid2}
 \begin{split}
 \mathscr{I}  & =(T+\gamma^+) \sum_{\tilde{j}, \tilde{\ell}}\left(\prod_{1\le i \le \I} \frac{k^{j_i}}{j_i !}\frac{k^{\ell_i}}{\ell_i !}  \right) \sum_{\tilde{p}, \tilde{q}} C(\tilde{p}, \tilde{q}) g\left( \prod_{1\le i \le \I} \prod_{\substack{1\le r \le j_i  \\ 1\le s \le \ell_i}}  p(i,r) q(i,s)  \right)\\
& \times \prod_{1\le i \le \I} \prod_{\substack{1\le r \le j_i  \\ 1\le s \le \ell_i}} \cos(\gamma^-\log p(i,r) )\cos(\gamma^-\log q(i,s) ) \\
&+O\left( (|\gamma^+|+T^{0.1}) \sum_{\tilde{j}, \tilde{\ell}}\left(\prod_{1\le i \le \I} \frac{k^{j_i}}{j_i !}\frac{k^{\ell_i}}{\ell_i !}  \right) \sum_{\tilde{p}, \tilde{q}} D(\tilde{p}, \tilde{q}) \right).
  \end{split}
\end{align}
By the argument of Harper \cite[p. 10]{Ha}, it can be shown that the big-$O$ term above is at most $(|\gamma^+|+T^{0.1}) T^{0.1}(\log\log T)^{2k}$. 

The  inner summand in \eqref{Ha-lemma1-mid2} is
$$
 C(\tilde{p}, \tilde{q}) g\left( \prod_{1\le i \le \I} \prod_{\substack{1\le r \le j_i  \\ 1\le s \le \ell_i}}  p(i,r) q(i,s)  \right)  \prod_{1\le i \le \I} \prod_{\substack{1\le r \le j_i  \\ 1\le s \le \ell_i}} \cos(\gamma^-\log p(i,r) )\cos(\gamma^-\log q(i,s) ).
$$
Since $g$ is supported on squares,  this expression is non-zero if and only if
$$\prod_{1\le i \le \I} \prod_{\substack{1\le r \le j_i  \\ 1\le s \le \ell_i}}  p(i,r) q(i,s)=p_{1}^2 \cdots p_{N}^2$$
for some $N \in \mathbb{N}$. 
In this case, we have 
\begin{equation}
  \label{nonnegativity}
 \prod_{1\le i \le \I} \prod_{\substack{1\le r \le j_i  \\ 1\le s \le \ell_i}} \cos(\gamma^-\log p(i,r) )\cos(\gamma^-\log q(i,s) ) 
 =\cos^2(\gamma^-\log p_1) \cdots \cos^2(\gamma^- \log p_N)
  \ge 0.
\end{equation}
By \eqref{Ha-lemma1-start}, 
\eqref{Ha-lemma1-mid1}, 
\eqref{Ha-lemma1-mid2}, \eqref{nonnegativity}, and \eqref{CDinequality}, we deduce that  
\begin{align*}
\mathscr{I} &\ll T \prod_{1\le i \le \I} \sum_{0\le j,\ell \le 100 \beta_i^{-3/4} }\frac{k^{j+\ell}}{j!\ell!} \sum_{T^{\beta_{i-1}}< p_1,\ldots,p_j,q_1,\ldots, q_{\ell} \le T^{\beta_{i}}} \frac{g(p_1\cdots p_jq_1\cdots q_{\ell})}{\sqrt{p_1\cdots p_jq_1\cdots q_{\ell}}}\\
&\times \cos(\gamma^- \log p_1) \cdots \cos(\gamma^-\log p_j) \cos(\gamma^- \log q_1 )\cdots \cos(\gamma^- \log q_{\ell}) \\
&+ O( (|\gamma^+|+T^{0.1}) T^{0.1} (\log \log T)^{2k})\\
&= T \prod_{1\le i \le \I} \sum_{0\le m \le 200 \beta_i^{-3/4} } k^m \sum_{\substack{ j+\ell=m \\ 0\le j,\ell \le 100 \beta_i^{-3/4} }}\frac{1}{j!\ell!} \sum_{T^{\beta_{i-1}}< p_1,\ldots,p_m \le T^{\beta_{i}}} \frac{g(p_1\cdots p_m)}{\sqrt{p_1\cdots p_m}}\\
&\times \cos(\gamma^- \log p_1) \cdots \cos(\gamma^- \log p_m) 
+O( (|\gamma^+|+T^{0.1}) T^{0.1} (\log \log T)^{2k})\\
&\le T \prod_{1\le i \le \I} \sum_{0\le m \le 200 \beta_i^{-3/4} } \frac{k^m 2^m}{m!}  \sum_{T^{\beta_{i-1}}< p_1,\ldots,p_m \le T^{\beta_{i}}} \frac{g(p_1\cdots p_m)}{\sqrt{p_1\cdots p_m}} \cos(\gamma^- \log p_1) \cdots \cos(\gamma^- \log p_m) \\
&+O( (|\gamma^+|+T^{0.1}) T^{0.1} (\log \log T)^{2k}),
\end{align*}
where the last inequality makes use of the non-negativity of the inner summand. 
Since $g$ is supported on squares, we must have that $m$ is even, say $m=2n$ with $n \ge 0$.  By relabelling the prime variables as 
$q_1, \ldots , q_{2n}$, we see that 
\begin{equation}
\label{fancyF1}
\begin{split}
 \mathscr{I} & \ll T \prod_{1\le i \le \I} \sum_{0\le n \le 100 \beta_i^{-3/4} } \frac{k^{2n} 2^{2n}}{(2n)!}  \sum_{T^{\beta_{i-1}}< q_1,\ldots,q_{2n} \le T^{\beta_{i}}} 
 \frac{g(q_1\cdots q_{2n})}{\sqrt{q_1\cdots q_{2n}}} \cos(\gamma^- \log q_1) \cdots \cos(\gamma^- \log q_{2n}) \\
&+O( (|\gamma^+|+T^{0.1}) T^{0.1} (\log \log T)^{2k}).
\end{split}
\end{equation}
Next, we observe that $q_1 \cdots q_{2n}$ is a square if and only it equals $p_{1}^2 \cdots p_{n}^2$ for some primes $p_u \in [T^{\beta{i-1}}, T^{\beta_i}]$
with $1 \le u \le n$.  
Grouping terms according to $q_1 \cdots q_{2n} =p_{1}^2 \cdots p_{n}^2  $ gives 
\begin{align*}
&\sum_{T^{\beta_{i-1}}< q_1,\ldots,q_{2n} \le T^{\beta_{i}}} 
 \frac{g(q_1\cdots q_{2n})}{\sqrt{q_1\cdots q_{2n}}} \cos(\gamma^- \log q_1) \cdots \cos(\gamma^- \log q_{2n})\\
 &= \sum_{T^{\beta_{i-1}}< p_1,\ldots,p_{n} \le T^{\beta_{i}}} 
  \sum_{\substack{T^{\beta_{i-1}}< q_1,\ldots,q_{2n} \le T^{\beta_{i}} \\ q_1\cdots q_{2n} = (p_1\cdots p_{n})^2}}
 \frac{g(p_1^2\cdots p_{n}^2)}{\sqrt{p_1^2\cdots p_{n}^2}} \cos^2(\gamma^- \log p_1) \cdots \cos^2(\gamma^- \log p_{n})
 \\
 &\times {\#\{  (p_1',\ldots, p_{n}')\mid  p_1'\cdots p_{n}'=p_1\cdots p_{n} \}}^{-1}\\
 &= \sum_{T^{\beta_{i-1}}< p_1,\ldots,p_{n} \le T^{\beta_{i}}} 
 \frac{g(p_1^2\cdots p_{n}^2)}{\sqrt{p_1^2\cdots p_{n}^2}} \cos^2(\gamma^- \log p_1) \cdots \cos^2(\gamma^- \log p_{n})\\
&\times  \frac{ {\#\{  (q_1,\ldots, q_{2n})\mid  q_1\cdots q_{2n}=(p_1 \cdots p_{n})^2 \}}  }{ {\#\{  (p_1', \ldots , p_{n}')\mid  p_1'\cdots p_{n}'=p_1\cdots p_{n} \}} }.
\end{align*}
In the above the factor ${\#\{  (p_1',\ldots, p_{n}')\mid  p_1'\cdots p_{n}'=p_1\cdots p_{n} \}}^{-1}$ accounts for possible repetitions when counting squares $p_1^2 \cdots p_n^2$.
With this observation, by following the argument in  \cite[p. 11]{Ha}, we have that the first term in \eqref{fancyF1} equals
\begin{align} \label{Ha-lemma1-mid3}
 \begin{split}
& T \prod_{1\le i \le \I} \sum_{0\le n \le 100 \beta_i^{-3/4}} \frac{(2k)^{2n}}{(2n)!} \sum_{T^{\beta_{i-1}}< p_1,\ldots,p_n \le T^{\beta_{i}}} \frac{g(p^2_1\cdots p^2_n)}{p_1\cdots p_n} \frac{\#\{(q_1\ldots q_{2n})\mid  q_1\cdots q_{2n}=p_1^2\cdots p^2_{n} \}}{\#\{  (q_1\ldots q_{n})\mid  q_1\cdots q_{n}=p_1\cdots p_{n} \}}\\
&\times\cos^2(\gamma^- \log p_1) \cdots \cos^2(\gamma^- \log p_n),
 \end{split}
\end{align}
where each $q_i$ again denotes a prime in $(T^{\beta_{i-1}},T^{\beta_{i}}]$.

By \cite[Eq. (4.2)]{Ha}, we know 
$$
g(p^2_1\cdots p^2_n) =\frac{1}{2^{2n}} \prod_{j=1}^r\frac{ (2\alpha_j)!}{(\alpha_j!)^2}
$$
and
$$
\frac{\#\{(q_1\ldots q_{2n})\mid  q_1\cdots q_{2n}=p_1^2\cdots p^2_{n} \}}{\#\{  (q_1\ldots q_{n})\mid  q_1\cdots q_{n}=p_1\cdots p_{n} \}}
=\frac{(2n)!}{\prod_{j=1}^r(2\alpha_j)! } \left(\frac{n!}{\prod_{j=1}^r\alpha_j! } \right)^{-1}
$$
whenever $p_1\cdots p_{n}$ is a product of $r$ distinct primes with multiplicities $\alpha_1,\ldots,\alpha_r$ (in particular, $\alpha_1+\cdots+\alpha_r=n$).  Therefore, the expression \eqref{Ha-lemma1-mid3} is equal to 
\begin{align*}
& T \prod_{1\le i \le \I} \sum_{0\le n \le 100 \beta_i^{-3/4}} \frac{k^{2n}}{n!} \sum_{T^{\beta_{i-1}}< p_1,\ldots,p_n \le T^{\beta_{i}}} \frac{\cos^2(\gamma^- \log p_1) \cdots \cos^2(\gamma^- \log p_n)}{p_1\cdots p_n} \frac{1}{\prod_{j=1}^r \alpha_j {!}}\\
&\le T \prod_{1\le i \le \I} \sum_{0\le n \le 100 \beta_i^{-3/4}} \frac{1}{n!} \left(k^2 \sum_{T^{\beta_{i-1}}< {p} \le T^{\beta_{i}}} \frac{\cos^2(\gamma^- \log p)}{p} \right)^n\\
&\le T\exp\left( k^2 \sum_{p\le T^{\beta_{\I}}} \frac{\cos^2(\gamma^- \log p)}{p} \right).
\end{align*}
Hence, we arrive at
\begin{equation}
  \label{Isecondlast}
\mathscr{I} \ll T\exp\left( k^2 \sum_{p\le T^{\beta_{\I}}} \frac{\cos^2(\frac{1}{2}(\alpha_1-\alpha_2) \log p)}{p} \right) + (|\gamma^+|+T^{0.1}) T^{0.1} (\log \log T)^{2k}.
\end{equation}
Since $\beta_{\I} < 1$ and $\cos^2(\theta) = \frac{1}{2}(1+\cos(2 \theta))$, from \eqref{Mertens-variant}, it follows that 
\begin{equation}
\begin{split}
  \label{cosbd}
\sum_{p\le T^{\beta_{\I}}} \frac{\cos^2( \frac{1}{2}(\alpha_1-\alpha_2) \log p)}{p}
&\le \sum_{p\le T} \frac{\cos^2( \frac{1}{2}(\alpha_1-\alpha_2) \log p)}{p} \\
&  =\frac{1}{2}\sum_{p\le T} \frac{1}{p}  +\frac{1}{2}\sum_{p\le T} \frac{\cos((\alpha_1-\alpha_2) \log p)}{p}\\
& \le \frac{1}{2} \log\log T  +\frac{1}{2}\log(\mathcal{F}(T,\alpha_1,\alpha_2)) +O(1),
\end{split}
\end{equation}
where $\mathcal{F}(T,\alpha_1,\alpha_2)$ is defined in  \eqref{def-mathcalF}.
Therefore, by \eqref{Isecondlast}, \eqref{cosbd}, and \eqref{a1plusa2cond}, 
\begin{equation*}
  \mathscr{I} \ll_k T (\log T)^{\frac{k^2}{2}}\mathcal{F}(T,\alpha_1,\alpha_2)^{\frac{k^2}{2}}+ T^{0.8} (\log \log T)^{2k}
   \ll T (\log T)^{\frac{k^2}{2}}\mathcal{F}(T,\alpha_1,\alpha_2)^{\frac{k^2}{2}}.
\end{equation*}
This combined with \eqref{Ha-lemma1-start} and \eqref{Ha-lemma1-startB} completes the proof of Lemma \ref{refind_Harper_Lemma1}.

\section{Proof of Lemma \ref{refind_Harper_Lemma2}}

Following Harper, by the definition $\mathcal{S}(j)$, we can bound the left hand side of \eqref{refind_Harper_est2} by
\begin{equation}\label{lemma2-eq-1}
\sum_{\ell=j+1}^{\mathcal{I}} \int_{T}^{2T}  \left( \prod_{1\le i \le j} (\exp (k \Re G_{i, j}(t))^2 \right) \mathds{1}_{A_{j,\ell}}(t)  dt,\footnote{For $S \subset \mathbb{R}$, $\mathds{1}_S(t)=1$ if $ t \in S$ and $\mathds{1}_S(t)=0$ if $ t \notin S$ (the indicator function of $S$).}
\end{equation}
where 
\begin{equation*}
   A_{j,\ell} := \Big\{ t \in \mathbb{R} \ | \ |\Re G_{i,j}(t)|\le \beta_i^{-3/4}, \,  \forall 1 \le i \le j, \text{ but } |\Re G_{j+1,\ell}(t)|> \beta_{j+1}^{-3/4} \Big\}.
\end{equation*}
From the definition of $A_{j,\ell}$, it follows that 
\begin{equation*}
    \mathds{1}_{A_{j,\ell}}(t)  \le (\beta_{j+1}^{3/4} |\Re G_{j+1,\ell}(t) |)^{M}
\end{equation*}
for any positive integer $M$.  From this point on, we set 
\begin{equation*}
  \label{M}
  M = 2[1/(10\beta_{j+1})].
\end{equation*}
Therefore the integral in \eqref{lemma2-eq-1} is
\begin{align}\label{lemma2-eq-2}
 \begin{split}
&\le  \int_{T}^{2T} \prod_{1\le i \le j} (\exp (k \Re G_{i, j}(t))^2 (\beta_{j+1}^{3/4} |\Re G_{j+1,\ell}(t) |)^{M} dt\\
&\ll (\beta_{j+1}^{3/2})^{[1/(10\beta_{j+1})]}  \int_{T}^{2T} \prod_{1\le i \le j} \left( \sum_{0\le n \le 100k \beta_{i}^{-3/4}} \frac{(k\Re G_{i,j}(t))^n}{n!} \right)^2  (\Re G_{j+1,\ell}(t) )^{M} dt,
 \end{split}
\end{align}
The second estimate can be established similar to the proof of Lemma 5.2 of \cite{Ki}. 
Arguing as in the proof of Lemma \ref{refind_Harper_Lemma1} and \cite[p. 13]{Ha}, since $g(p_1\cdots p_m)\cos(\gamma^- \log p_1) \cdots \cos(\gamma^- \log p_m) \ge 0$, we deduce 
\begin{align}\label{lemma2-eq-3}
 \begin{split}
& \int_{T}^{2T} \prod_{1\le i \le j} \left( \sum_{0\le n \le 100k \beta_{i}^{-3/4}} \frac{(k\Re G_{i,j}(t))^n}{n!} \right)^2  (\Re G_{j+1,\ell}(t))^{M} dt\\
&\ll  T \prod_{1\le i \le j} \sum_{0\le m \le 200k \beta_i^{-3/4} } \frac{k^m 2^m}{m!}  \sum_{T^{\beta_{i-1}}< p_1,\ldots,p_m \le T^{\beta_{i}}} \frac{g(p_1\cdots p_m)}{\sqrt{p_1\cdots p_m}} \cos(\gamma^- \log p_1) \cdots \cos(\gamma^- \log p_m) \\
&\times \sum_{T^{\beta_{j}}< p_1,\ldots,p_M \le T^{\beta_{j+1}}} \frac{g(p_1\cdots p_M)}{\sqrt{p_1\cdots p_M}} \cos(\gamma^- \log p_1) \cdots \cos(\gamma^- \log p_M)\\
&+O( (|\gamma^+|+T^{0.3}) T^{0.3}  (\log \log T)^{2k}).
\end{split}
\end{align}
Since $g$ is supported on squares, by following an argument similar to the proof of Lemma \ref{refind_Harper_Lemma1}, we find that 
the previous expression is bounded by 
\begin{align}\label{lemma2-eq-4}
\begin{split}
&\ll T \exp\left( k^2 \sum_{p\le T^{\beta_{j}}} \frac{\cos^2(\gamma^- \log p)}{p} \right) \times \frac{M!}{2^M (M/2)!} \left( \sum_{T^{\beta_j} \le p\le T^{\beta_{j+1}}} \frac{\cos^2(\gamma^- \log p)}{p}  \right)^{M/2}
\\
&+O(  (|\gamma^+|+T^{0.3}) T^{0.3} (\log \log T)^{2k})\\
&\ll T \exp\left( k^2 \sum_{p\le T^{\beta_{j}}} \frac{\cos^2(\gamma^- \log p)}{p} \right) \times \left( \frac{1}{20 \beta_{j+1}} \sum_{T^{\beta_j} \le p\le T^{\beta_{j+1}}} \frac{\cos^2(\gamma^- \log p)}{p} \right)^{[1/(10\beta_{j+1})]}
\\
&+O(  (|\gamma^+|+T^{0.3}) T^{0.3} (\log \log T)^{2k}).
 \end{split}
\end{align}
Hence, by \eqref{lemma2-eq-1}, \eqref{lemma2-eq-2}, \eqref{lemma2-eq-3}, and \eqref{lemma2-eq-4}, we arrive at
\begin{align}\label{lemma2-final}
 \begin{split}
&\int_{t\in \mathcal{S}(j)} \exp \left( 2k \Re\sum_{ p\le T^{\beta_{j}}}\frac{\cos( \frac{1}{2}(\alpha_1-\alpha_2) \log p)}{p^{\frac{1}{2}+\frac{1}{\beta_{j} \log T}+\mi (t+\frac{1}{2}(\alpha_1+\alpha_2)) } }\frac{\log(T^{\beta_{j}}/p)}{\log T^{\beta_{j}} }  \right)dt \\
&\ll_k (\mathcal{I}-j) T \exp\left( k^2 \sum_{p\le T^{\beta_{j}}} \frac{\cos^2( \frac{1}{2}(\alpha_1-\alpha_2)  \log p)}{p} \right) \\
& \times \left( \frac{\beta_{j+1}^{1/2}}{20} \sum_{T^{\beta_j} < p\le T^{\beta_{j+1}}} \frac{\cos^2(  \frac{1}{2}(\alpha_1-\alpha_2)  \log p)}{p} \right)^{[1/(10\beta_{j+1})]}\\
&+(\mathcal{I}-j) ( (|\gamma^+|+T^{0.3}) T^{0.3}) (\log \log T)^{2k}.
 \end{split}
\end{align}
Recall that $\mathcal{I}\le \log\log\log T$, $\beta_0=0$,  $\beta_1=\frac{1}{(\log\log T)^2}$, and 
$$
\sum_{p\le T^{\beta_1}} \frac{1}{p} \le \log\log T.
$$
Observe that  for $j=0$, the left of \eqref{lemma2-final} is $\meas(\mathcal{S}(0))$. Therefore, by the trivial bound $\cos^2(\frac{1}{2}(\alpha_1-\alpha_2) \log p)\le 1$ and  the assumption $|\gamma^+|=|\frac{\alpha_1+\alpha_2}{2}|\ll T^{0.6}$,  we derive $\meas(\mathcal{S}(0)) \ll Te^{-(\log\log T)^2/10}$. 

For $1\le j\le \mathcal{I}-1$, we have $\mathcal{I}-j \le \frac{\log(1/\beta_j)}{\log 20}$ and
$$
\sum_{T^{\beta_j} < p\le T^{\beta_{j+1}}} \frac{\cos^2( \frac{1}{2}(\alpha_1-\alpha_2)  \log p)}{p}  \le \sum_{T^{\beta_j} < p\le T^{\beta_{j+1}}} \frac{1}{p} = \log \beta_{j+1} -\log \beta_{j} +o(1) \le 10.
$$
Thus, by \eqref{Mertens-variant} and  the assumption $|\gamma^+|\ll T^{0.6}$, we see that  the left of \eqref{lemma2-final} is
$$
\ll_k T (\log T)^{\frac{k^2}{2}} \mathcal{F}(T,\alpha_1,\alpha_2)^{\frac{k^2}{2}} \exp \left(- \frac{\log(1/\beta_{j+1})}{21 \beta_{j+1}}\right)
$$
as desired.

\section{Proof of Lemma \ref{refind_Harper_Lemma3}}

The proof of Lemma \ref{refind_Harper_Lemma3} is similar to the proofs of Lemmas  \ref{refind_Harper_Lemma1} and 
 \ref{refind_Harper_Lemma2} 
 One key difference is that we need to invoke Lemma \ref{refined-H-R} in place of Lemma \ref{formula-f}.
 In this section, we shall  establish the estimate \eqref{replace}. As the  proof of \eqref{replace2} is similar, the 
details shall be omitted.  The integral in \eqref{replace} shall be denoted $\int_{\mathcal{T}} \exp(\varphi(t)) \, dt$ where $\exp(\varphi(t)) $ is the  integrand in \eqref{replace}.  First, we decompose this integrand in terms of integer parameters $m$ 
satisfying $0 \le m \le \frac{\log\log T}{\log 2}$.  For each such $m$, we define 
\begin{equation} 
 \label{Pmt}
 P_m(t) = \sum_{2^m< p\le 2^{m+1}} \frac{\cos( (\alpha_1-\alpha_2)\log p)}{2p^{1+ \mi (2t+ (\alpha_1+\alpha_2) )}}.
 \end{equation}
 and the sets 
\begin{equation}
\begin{split}
  \label{Pm}
&\mathcal{P}(m)\\
& := \left\{ t\in [T, 2T] \mid  |\Re P_m(t)|> 2^{-m/10},  \text{ but } |\Re P_n(t)|\le 2^{-n/10} \text{ for every } m+1\le n \le \frac{\log\log T}{\log 2}\right\}.
\end{split}
\end{equation}
 Observe that we have the identity
 \begin{align}
 \label{idenP}
     \sum_{p\le \log T }\frac{\cos( (\alpha_1-\alpha_2) \log p)}{2p^{1+ \mi (2t+ (\alpha_1+\alpha_2) )}}
   = \sum_{0 \le m \le  \frac{\log \log T}{\log 2}}  P_m(t). 
\end{align}
We now have the decomposition
\begin{equation}
\label{twoterms}
  \int_{\mathcal{T}} \exp( \varphi(t))) \, dt = \sum_{0 \le m \le \frac{\log \log T}{\log 2}} \int_{\mathcal{T} \cap \mathcal{P}(m)} 
  \exp( \varphi(t))) \, dt  + \int_{\mathcal{T} \cap \left( \bigcap_{m} \mathcal{P}(m)^c  \right)}   \exp( \varphi(t))) \, dt.
\end{equation}
In order to establish \eqref{replace}, we shall bound each of the integrals on the right side of \eqref{twoterms}.

If $t$ does not belong to any $\mathcal{P}(m)$, then $|\Re P_n(t)|\le 2^{-n/10}$ for all $n\le \frac{\log\log T}{\log 2}$. (Indeed, for those $t$ belonging to none of $\mathcal{P}(m)$, $0\le m\le \frac{\log\log T}{\log 2}$, if $|\Re P_m(t)|> 2^{-m/10}$ for some $0 \leq m \le \frac{\log\log T}{\log 2}$, then $|\Re P_L(t)|> 2^{-L/10}$ for some $m+1\le L\le \frac{\log\log T}{\log 2}$ as $t\notin \mathcal{P}(m)$. Choosing $L$ to be maximal, we then have $|\Re P_n(t)|\le 2^{-n/10}$ for every $L+1\le n \le \frac{\log\log T}{\log 2}$, which means $t\in \mathcal{P}(L)$, a contradiction.) For such an instance,  $\Re \sum_{p\le \log T }\frac{\cos( (\alpha_1-\alpha_2)\log p)}{2p^{1+ \mi (2t+ (\alpha_1+\alpha_2) )}} =O(1)$.  Hence, the contribution of such $t$ to the integral $\int_{\mathcal{T}}$ can be bounded by using Lemma \ref{refind_Harper_Lemma1}. 
That is, 
\begin{equation}
\begin{split}
  \label{firstbound}
  &  \int_{\mathcal{T} \cap \left( \bigcap_{m} \mathcal{P}(m)^c  \right)}   \exp( \varphi(t))) \, dt  \\
   & \ll  \int_{\mathcal{T} \cap \left( \bigcap_{m} \mathcal{P}(m)^c  \right)}  \exp \left( 2k \Re\sum_{ p\le T^{\beta_{\I}}}\frac{\cos(\frac{1}{2}(\alpha_1-\alpha_2)\log p)}{p^{\frac{1}{2}+\frac{1}{\beta_{\I} \log T}+\mi (t+\frac{1}{2}(\alpha_1+\alpha_2)) } }\frac{\log(T^{\beta_{\I}}/p)}{\log T^{\beta_{\I}} }  \right)dt \\
&\ll_k T (\log T)^{\frac{k^2}{2}} \left(\mathcal{F}(T,\alpha_1,\alpha_2) \right)^{\frac{k^2}{2}}.
\end{split}
  \end{equation}

It remains to estimate the contribution from $t\in \mathcal{T} \cap \mathcal{P}(m)$, with  $0 \le m \le \frac{\log\log T}{\log 2}$, to $\int_{\mathcal{T}}$ (more precisely, the first  integral on the right of \eqref{twoterms}).  To do so, we first consider the case that $0\le m\le \frac{2\log\log\log T}{\log 2}$. In this case, we have
  \begin{align*}
 \left|\Re  \sum_{p\le \log T }\frac{\cos( (\alpha_1-\alpha_2) \log p)}{2p^{1+ \mi (2t+ (\alpha_1+\alpha_2) )}} \right|
 &\leq  \sum_{0\leq n \leq \frac{\log \log T}{\log 2}} | \Re P_n(t)|\\
 &\leq  \sum_{0\leq n  \leq m} |\Re P_n(t) | +   \sum_{m+1 \leq  n  \leq \frac{\log \log T}{\log 2}} | \Re P_n(t) |\\
  &\leq   \sum_{ p\le 2^{m+1}} \frac{1}{2p}  +  \sum_{m + 1 \leq n  \leq \frac{\log \log T}{\log 2}}  \frac{1}{2^{n/10}}.
  \end{align*}
  The last inequality makes use of the definition of $ P_m(t) $ in \eqref{Pm}. Therefore,
\begin{align*}
&\left| \Re\left( \sum_{ p\le 2^{m+1}}\frac{\cos( \frac{1}{2}(\alpha_1-\alpha_2) \log p)}{p^{\frac{1}{2}+\frac{1}{\beta_{\I} \log T}+\mi (t+ \frac{1}{2}(\alpha_1+\alpha_2) ) } }\frac{\log(T^{\beta_{\I}}/p)}{\log T^{\beta_{\I}} }  + \sum_{p\le \log T }\frac{\cos( (\alpha_1-\alpha_2) \log p)}{2p^{1+ \mi (2t+ (\alpha_1+\alpha_2) )}} \right) \right| \\
&\le \sum_{ p\le 2^{m+1}}\frac{1}{\sqrt{p}} + \sum_{ p\le 2^{m+1}}\frac{1}{2p} 
 +\sum_{m+1\le n \le \frac{\log\log T}{\log 2}}  \frac{1}{2^{n/10}}\\
 &\ll 2^{m/2}.
\end{align*}
Thus, we derive
\begin{align}\label{Ha-lemma3-mid1}
 \begin{split}
& \int_{t\in \mathcal{T}\cap \mathcal{P}(m)} \exp \left( 2k \Re \left( \sum_{ p\le T^{\beta_{\I}}}\frac{\cos( \frac{1}{2}(\alpha_1-\alpha_2) \log p)}{p^{\frac{1}{2}+\frac{1}{\beta_{\I} \log T}+\mi (t+ \frac{1}{2}(\alpha_1+\alpha_2) ) } }\frac{\log(T^{\beta_{\I}}/p)}{\log T^{\beta_{\I}} }  +  \sum_{p\le \log T }\frac{\cos( (\alpha_1-\alpha_2) \log p)}{2p^{1+ \mi (2t+ (\alpha_1+\alpha_2) )}}  \right) \right)dt \\
&\le e^{O(k2^{m/2})} \int_{t\in \mathcal{T}\cap \mathcal{P}(m)} \exp \left( 2k \Re\sum_{2^{m+1}< p\le T^{\beta_{\I}}}\frac{\cos(\frac{1}{2}(\alpha_1-\alpha_2)\log p)}{p^{\frac{1}{2}+\frac{1}{\beta_{\I} \log T}+\mi (t+\frac{1}{2}(\alpha_1+\alpha_2)) } }\frac{\log(T^{\beta_{\I}}/p)}{\log T^{\beta_{\I}} }  \right)dt \\
&\le e^{O(k2^{m/2})} \int_{t\in \mathcal{T}}  \exp \left( 2k \Re\sum_{2^{m+1}< p\le T^{\beta_{\I}}}\frac{\cos(\frac{1}{2}(\alpha_1-\alpha_2)\log p)}{p^{\frac{1}{2}+\frac{1}{\beta_{\I} \log T}+\mi (t+\frac{1}{2}(\alpha_1+\alpha_2)) } }\frac{\log(T^{\beta_{\I}}/p)}{\log T^{\beta_{\I}} }  \right) \\  &\times (2^{m/10} \Re P_m(t))^{2[2^{3m/4}]}dt. 
 \end{split}
\end{align}
Let $N =2[2^{3m/4}] $. Arguing as in the proof of Lemma \ref{refind_Harper_Lemma1} while applying Lemma \ref{refined-H-R} instead of Lemma \ref{formula-f}, we see the above integral is
\begin{align}\label{Ha-lemma3-mid2}
 \begin{split}
&\ll   T \exp\left( k^2 \sum_{2^{m+1} < p\le T^{\beta_{\mathcal{I}}}} \frac{\cos^2(\frac{1}{2}(\alpha_1-\alpha_2) \log p)}{p} \right)\times 2^{mN/10}  \sum_{2^m< p_1,\ldots,p_{N} \le 2^{m+1}} \frac{g(p_1\cdots p_{N})}{p_1\cdots p_{N}} \\
&+ (|\gamma^+| +T^{0.1+o(1)}) T^{0.1} (\log\log T)^{2k} \\
&\ll T \exp\left( k^2 \sum_{2^{m+1} <p\le T^{\beta_{\mathcal{I}}}} \frac{\cos^2(\frac{1}{2}(\alpha_1-\alpha_2) \log p)}{p} \right)\times  \frac{2^{mN/10} N!}{2^{N} (N/2)!} \left( \sum_{2^m < p\le 2^{m+1}} \frac{1}{p^2}  \right)^{N/2} \\
&+T^{0.7} (\log\log T)^{2k}
 \end{split}
\end{align}
 as  $|\gamma^+|= |\frac{\alpha_1+\alpha_2}{2}|\ll T^{0.6}$.
Indeed, recall that $2^{m}\le \log T$, and note that if $2^m< p_1,\ldots,p_{N} \le 2^{m+1}$, then
$$
 p_1^2\cdots p_{N}^2 \le 2^{2(m+1)N}  \le 2^{6 (\log\log T) (\log T)^{3/4}}.
$$
Therefore, the additional contribution of $(\Re P_m(t))^{N}$ enlarges the last big-O term \eqref{Ha-lemma1-mid2} as
$$
(|\gamma^+| + T^{0.1 }2^{6 (\log\log T) (\log T)^{3/4}} )  \sum_{2^m< p_1,\ldots,p_{N} \le 2^{m+1}} \frac{1}{p_1\cdots p_{N}} T^{0.1}(\log\log T)^{2k},
$$
where the sum is equal to
$$
\left( \sum_{2^m < p\le 2^{m+1}} \frac{1}{p} \right)^{N} \le \left( (2^{m+1} -2^m)  \frac{1}{2^m} \right)^{N} =1.
$$
Now, by \eqref{Ha-lemma3-mid2}, the left of \eqref{Ha-lemma3-mid1} is
\begin{align}
\label{cas2}
\begin{split}
&\ll e^{O(k2^{m/2})} \left(2^{m/5}\cdot 2^{3m/4}\cdot 2^{-m}  \right)^{[2^{3m/4}]}  T  (\log T)^{\frac{k^2}{2}} \mathcal{F}(T,\alpha_1,\alpha_2)^{\frac{k^2}{2}} \\
&\ll  e^{O(k2^{m/2}) -2^{3m/4}}  T  (\log T)^{\frac{k^2}{2}} \mathcal{F}(T,\alpha_1,\alpha_2)^{\frac{k^2}{2}}. 
\end{split}
\end{align}

Secondly, we evaluate the contribution from $t\in \mathcal{T} \cap \mathcal{P}(m)$ with $ \frac{2\log\log\log T}{\log 2} <  m \le \frac{\log\log T}{\log 2}$. We shall consider 
\begin{align}
 \begin{split}
 \int_{t\in \mathcal{T}\cap \mathcal{P}(m)}1\, dt \leq
 \int_{t\in \mathcal{T}}  (2^{m/10} \Re P_m(t))^{2[2^{3m/4}]}\, dt.
 \end{split}
\end{align}
Following the previous  argument in  \eqref{Ha-lemma3-mid1}  where the exponential factor is replaced by $1$,  one can show that $\meas(\mathcal{T}\cap \mathcal{P}(m))\ll T e^{ -2^{3m/4}}$. So, for $2^m\ge (\log\log T)^2$,  we see 
$\meas(\mathcal{T}\cap\mathcal{P}(m))\ll T e^{ -(\log \log T)^{3/2}}$.
In addition, the Cauchy-Schwarz inequality tells us that 
\begin{align}
\label{Pmleft}
\begin{split}
&\int_{t\in \mathcal{T}\cap \mathcal{P}(m)} \exp \left( 2k \Re \left( \sum_{ p\le T^{\beta_{\I}}}\frac{\cos( \frac{1}{2}(\alpha_1-\alpha_2) \log p)}{p^{\frac{1}{2}+\frac{1}{\beta_{\I} \log T}+\mi (t+ \frac{1}{2}(\alpha_1+\alpha_2) ) } }\frac{\log(T^{\beta_{\I}}/p)}{\log T^{\beta_{\I}} }  +  \sum_{p\le \log T }\frac{\cos( (\alpha_1-\alpha_2) \log p)}{2p^{1+ \mi (2t+ (\alpha_1+\alpha_2) )}}  \right) \right)dt\\
&\ll e^{k \log \log \log T } \int_{t\in \mathcal{T}\cap \mathcal{P}(m)} \exp \left( 2k \Re \sum_{ p\le T^{\beta_{\I}}}\frac{\cos( \frac{1}{2}(\alpha_1-\alpha_2) \log p)}{p^{\frac{1}{2}+\frac{1}{\beta_{\I} \log T}+\mi (t+ \frac{1}{2}(\alpha_1+\alpha_2) ) } }\frac{\log(T^{\beta_{\I}}/p)}{\log T^{\beta_{\I}} }    \right)dt \\
&\ll (\log \log T)^k
\left(  \int_{t\in \mathcal{T}\cap \mathcal{P}(m)} \exp \left( 4k \Re\sum_{ p\le T^{\beta_{\I}}}\frac{\cos( \frac{1}{2}(\alpha_1-\alpha_2) \log p)}{p^{\frac{1}{2}+\frac{1}{\beta_{\I} \log T}+\mi (t+ \frac{1}{2}(\alpha_1+\alpha_2) ) } }\frac{\log(T^{\beta_{\I}}/p)}{\log T^{\beta_{\I}} }   \right) dt \right)^{\frac{1}{2}}\\
& \times  (\meas(\mathcal{T}\cap \mathcal{P}(m)))^{\frac{1}{2}}.
\end{split}
\end{align}
As Lemma \ref{refind_Harper_Lemma1} gives
\begin{align*}
 \int_{t\in \mathcal{T}\cap \mathcal{P}(m)} \exp \left( 4k \Re  \sum_{ p\le T^{\beta_{\I}}}\frac{\cos( \frac{1}{2}(\alpha_1-\alpha_2) \log p)}{p^{\frac{1}{2}+\frac{1}{\beta_{\I} \log T}+\mi (t+ \frac{1}{2}(\alpha_1+\alpha_2) ) } }\frac{\log(T^{\beta_{\I}}/p)}{\log T^{\beta_{\I}} }  \right)dt
 \ll T (\log T)^{4k^2},
\end{align*}
we see that \eqref{Pmleft} is bounded by 
\begin{align} 
\label{cas3}
 \ll_k T e^{ -\frac{1}{4}(\log \log T)^{3/2}} .
\end{align} 

Finally, we complete the proof of the Lemma by combining \eqref{twoterms}  with the bounds \eqref{cas2}
($0\le m\le \frac{2\log\log\log T}{\log 2}$) and \eqref{cas3} ($ \frac{2\log\log\log T}{\log 2} < m \leq \frac{\log\log T}{\log 2} $ )
for $\int_{\mathcal{T} \cap \mathcal{P}(m)} 
  \exp( \varphi(t))) \, dt$ and the bound \eqref{firstbound}.

\end{document}